\newcommand{\R}{\textnormal{I\kern-0.21emR}}
\newcommand{\N}{\textnormal{I\kern-0.21emN}}
\renewcommand{\geq}{\geqslant}
\renewcommand{\leq}{\leqslant}
\def\e{{\varepsilon}}
\def\OT{{(0;T)\times \O}}
\def\YYint#1#2#3{{\setbox0=\hbox{$#1{#2#3}{\iint}$}
    \vcenter{\hbox{$#2#3$}}\kern-.51\wd0}}
\newtheorem*{theorem*}{Theorem}
\newtheorem{theorem}{Theorem}  
\newtheorem{proposition}{Proposition}
\newtheorem{lemma}{Lemma}
\theoremstyle{definition}\newtheorem{remark}{Remark}
\def\O{{\Omega}}
\def\n{{\nabla}}
\def\p{{\varphi}}
 \newcommandx{\unsure}[2][1=]{\todo[linecolor=red,backgroundcolor=red!25,bordercolor=red,#1]{#2}}
 \newcommandx{\change}[2][1=]{\todo[linecolor=blue,backgroundcolor=blue!25,bordercolor=blue,#1]{#2}}
 \newcommandx{\info}[2][1=]{\todo[linecolor=green,backgroundcolor=green!25,bordercolor=green,#1]{#2}}
 \newcommandx{\improvement}[2][1=]{\todo[linecolor=yellow,backgroundcolor=yellow!25,bordercolor=yellow,#1]{#2}}
  \newcommandx{\biblio}[2][1=]{\todo[linecolor=blue,backgroundcolor=magenta!25,bordercolor=blue,#1]{#2}}
\begin{document}
\nocite{*}
\title{Optimisation of the total population size with respect to the initial condition for semilinear parabolic equations:\\ Two-scale expansions and symmetrisations}


\author{Idriss Mazari\footnote{Technische Universit\"{a}t Wien, Institute of Analysis and Scientific Computing, 8-10 Wiedner Haupstrasse, 1040 Wien (\texttt{idriss.mazari@tuwien.ac.at})}, Gr\'egoire Nadin\footnote{ CNRS, Sorbonne Universit\'es, UPMC Univ Paris 06, UMR 7598, Laboratoire Jacques-Louis Lions, F-75005, Paris, France (\texttt{gregoire.nadin@sorbonne-universite.fr})}, Ana Isis Toledo Marrero\footnote{ Sorbonne Universit\'es, UPMC Univ Paris 06, UMR 7598, Laboratoire Jacques-Louis Lions, F-75005, Paris, France (\texttt{ana-isis.toledo\_marrero@sorbonne-universite.fr)}}}

\maketitle

\begin{abstract}
In this article, we propose in-depth analysis and characterisation of the optimisers of the following optimisation problem: how to choose the initial condition $u_0$ in order to maximise the spatial integral at a given time of the solution of the semilinear equation $u_t-\Delta u=f(u)$, under $L^\infty$ and $L^1$ constraints on $u_0$? Our contribution in the present paper is to give a characterisation of the behaviour of the optimiser $\overline{u}_0$ when it does not saturate the $L^\infty$ constraints, which is a key step in implementing efficient numerical algorithms. We give such a characterisation under mild regularity assumptions by proving that in that case $\overline{u}_0$ can only take values in the "zone of concavity" of $f$. This is done using two-scale asymptotic expansions. We then show how well-known isoperimetric inequalities yield a full characterisation of maximisers when $f$ is convex. Finally, we provide several numerical simulations in one and two dimensions that illustrate and exemplify the fact that such characterisations significantly improve the computational time. All our theoretical results are in the one-dimensional case and we offer several comments about possible generalisations to other contexts, or obstructions that may prohibit doing so.
\end{abstract}

\noindent\textbf{Keywords:} Reaction equation, optimal control, shape optimisation, two-scale expansions.

\medskip

\noindent\textbf{AMS classification:} 35Q92,49J99,34B15.
\paragraph{Acknowledgement} The authors wish to warmly thank J. Pertinand for scientific conversations.   The authors also wish to thank the anonymous referee for her/his comments, which have helped us improve the quality of this paper. \color{black} I. Mazari was partially supported by the French ANR Project ANR-18-CE40-0013 - SHAPO on Shape optimisation and by the Austrian Science Fund (FWF) through the grant I4052-N32 . I. Mazari and G. Nadin were partially supported by the Project "Analysis and simulation of optimal shapes - application to lifesciences" of the Paris City Hall.


\section{Introduction}
\subsection{Scope of the article}
In this article, we propose to establish several results concerning an optimal control problem for a class of semilinear parabolic equations. Some aspects of this problem have been initially addressed by two of the authors in \cite{NadinToledo}. In the setting under consideration, the control variable to optimise is the initial condition. As we will see throughout the statement of the results, the form (e.g. convex or concave) of the semilinearity plays a crucial role in the analysis and calls for a detailed study of second order optimality conditions, which is our main result, Theorem \ref{Prop1}. In the case of convex semilinearities, using rearrangement arguments, we can give a full characterisation of maximisers, see Theorem \ref{Th:Convex}. Using Theorem \ref{Prop1}, we can improve an algorithm initially developed in \cite{NadinToledo}, and we display numerical results in Section \ref{sec:numerics}. 

\paragraph{Initial motivation of the paper}
The origin of this paper is the study of an optimal control problem that arises naturally in mathematical biology and that deals with bistable reaction-diffusion equations. Namely, for a semilinear equation, what is the best possible initial condition ("best" being understood as maximising the integral of the solution at a certain time horizon $T$)? The complicated behaviour of bistable non-linearities, which are neither convex nor concave, makes the analysis of this query very intricate. The two aforementioned results, Theorems \ref{Prop1} and \ref{Th:Convex}, enable us to show how complicated the behaviour of maximisers can be for such non-linearities. 
 Bistable equations are of central importance in mathematical biology \cite{Murray} and, very broadly speaking, model the evolution of a subgroup of a population. Among their many applications, one may mention chemical reactions \cite{Perthame2015}, neurosciences \cite{EvansNerve}, phase transition \cite{DeMasi1986},
linguistic dynamics \cite{Prochazka2017}
or the  evolution of diseases \cite{Murray}.
The last interpretation is of particular relevance to us, given that this model is used to design optimal strategies in order to control the spread of several mosquito borne diseases such as the dengue; this was the main motivation in \cite{MR3987527}. The strategy is to release a certain amount of Wolbachia carrying mosquitoes (Wolbachia is a bacterium that inhibits the transmission of mosquito borne diseases that individuals inherit from their mother) in a population of wild mosquitoes that can potentially transmit the diseases, in order to maximise the proportion of Wolbachia carrying mosquitoes at the final time. In mathematical terms: given a time horizon $T$,
\begin{center}
\emph{How should we arrange the initial population in order to maximise the population size at $T$?}\end{center}
Even without having stated it formally, we can make two observations on this problem: the first one is that , since the variable of the equation is the proportion of a subgroup, \color{black} we need to enforce pointwise ($L^\infty$) constraints. The second one is that we naturally have to add an $L^1$ constraint for modelling reasons. Both   of \color{black} these constraints can in practice be very complicated to handle.

\paragraph{Optimisation problems in mathematical biology}
Let us briefly sketch how this problem fits in the literature devoted to such optimisation and control problems for mathematical biology.
Optimisation problems for reaction-diffusion equations have by now gathered a lot of attention from the mathematical community. Most of these optimisation problems are set in a stationary setting, that is, assuming that the population has already reached an equilibrium, and the main problems that have been considered often deal with the optimisation of the spatial heterogeneity \cite{CaubetDeheuvelsPrivat,InoueKuto,KaoLouYanagida,Lou2006,LouNagaharaYanagida,MNP,MNP2,MRB,NagaharaYanagida}  (we also refer to the recent surveys \cite{Lam,MNP3}); most of these works deal with monostable nonlinearities. We also point to the recent \cite{BINTZ2020} for the study of an optimal control problem for parabolic monostable equations. On the other hand, optimisation problems for bistable equations, which are the other paradigmatic class of equations in mathematical biology \cite{Murray}, have received a less complete mathematical treatment, but are now the topic of an intense research activity from the control point of view, see \cite{MR3987527,NadinToledo} and the references therein. Related optimal control problems are not yet fully understood. More generally, less attention has been devoted to optimisation problem with respect to the initial condition for such semilinear evolution equations.

%

\subsection{Mathematical setup and statement of the results}

\subsubsection{Statement of the problem}
We work in $\O=(0;\pi)$. 
We consider a $\mathscr C^2$ function $f:[0;1]\rightarrow \R$, and the associated parabolic equation
\begin{equation} \label{eq:rd_Gral}
	\left\{
		\begin{array}{ll}
      \partial_t u - \Delta u = f(u) &\hbox{ in } \R_+ \times \Omega,\\
      u(0,x) = u_{0}(x)  &\hbox{ in } \Omega,\\
      {\partial u \over \partial \nu}(t,x)=0 &\text{ in }\R_+\times \partial \O,		\end{array}  
	\right.
\end{equation}
where $u_0$ is an initial condition satisfying the constraint 
$$0\leq u_0\leq 1.$$
Since our initial motivation, as explained in the first paragraph of this introduction, is to maximise the proportion of a subgroup of a population, such an $L^\infty$ constraint is natural. At the mathematical level, it should be noted that we could carry out the same analysis with any $L^\infty$ constraint of the form $0\leq u_0\leq \kappa$ by a simple change of variable. 

We define, for any $T>0$, the functional 
\begin{equation} \label{def:Operator}
  \mathcal{J}_{T}(u_{0}):=\int_{\Omega}u(T,x)dx.
\end{equation} 

The goal is to maximise $\mathcal J_T$ with respect to $u_0$. Since we are then again wondering how to maximise the proportion of a subgroup by controlling its distribution at the initial time, it is natural to introduce a $L^1$ constraint on $u_0$. This constraint is encoded by a parameter $m\in (0;|\O|)$ which is henceforth fixed. 

These considerations lead us to defining our admissible class as 
\begin{equation}
\mathcal A:=\left\{u_0\in L^\infty(\O)\,, 0\leq u_0\leq 1\text{ a.e. }\,, \int_\O u_0=m\right\},\end{equation}
and the variational problem under scrutiny throughout this paper is
\begin{equation}\label{Eq:Pv}\tag{$\bold{P}_f$}
\underset{u_0\in \mathcal A}\max  {\, \mathcal J_T}(u_0).\end{equation}
This problem $\eqref{Eq:Pv}$ was directly addressed by two of the authors in \cite{NadinToledo}, where expressions for the first and second order optimality conditions were provided. We need to recall them, to motivate and contextualise our results: if we consider $u_0\in \mathcal A$ and an admissible perturbation $h_0$ at $u_0$ (by ``admissible perturbation'' we refer to the fact that $h_0$ belongs to the tangent cone to the set $\mathcal A$ at $u_0$.
This tangent cone is the set of functions $h\in L^\infty(\O)$ such that, for any sequence of positive real numbers $(\varepsilon_n)_{n\in \N}$ decreasing to $0$, there exists a sequence of functions $(h_n)_{n\in \N}\in L^\infty(\O)^\N$ converging to $h$ as $n\rightarrow +\infty$, and $u_0+\varepsilon_nh_n\in\mathcal A$ for every $n\in\N$) then the first order G\^ateaux-derivative of $\mathcal J_T$ at $u_0$ in the direction $h_0$ is 
\begin{eqnarray}\label{Derivative:J_T}
  \langle \nabla \mathcal{J}_T(u_0),h_0\rangle  & = & \int_{\Omega} h_0(x)p(0,x)dx\end{eqnarray} where $p$ solves the adjoint equation
  \begin{equation} \label{eq:rd_p}
  \left\{
    \begin{array}{ll}
      -\partial_t p - \Delta p = f'({u})p \quad &\hbox{ in } (0,T) \times \Omega,\\
      \\
      p(T,x) = 1 \quad &\hbox{ in } \Omega,\\
      \\
      {\partial p \over \partial \nu}(t,x)=0 \quad &\hbox{ for all } t\in (0,T), \hbox{ for all } x\in \partial \Omega.
    \end{array} 
  \right.
\end{equation} 
Here $u$ is the solution of \eqref{eq:rd_Gral} with initial condition $u_0$.

The main result in \cite{NadinToledo} states the following:

\begin{theorem*} \label{Thm1} \cite{NadinToledo}
  There exist a solution $\overline{u}_0 \in \mathcal{A}$ of (\ref{Eq:Pv}).	Moreover, setting $\overline{u}$ as the solution of (\ref{eq:rd_Gral}) associated with this optimal initial data and $\overline{p}$ as the unique solution of (\ref{eq:rd_p}) for $u=\overline{u}$, there exists a non-negative real value $\overline{c}$ such that 
	\begin{enumerate}
		\item[i)] if $0 < \overline{u}_0(x)< 1$ then $\overline{p}(0,x)=\overline{c}$,
		\item[ii)]  if $\overline{p}(0,x)> \overline{c}$, then $ \overline{u}_0(x) = 1$,
		\item[iii)] if $\overline{p}(0,x)< \overline{c}$, then $ \overline{u}_0(x) = 0$.
	\end{enumerate}
Finally,  for almost every $x\in \{ \overline{p}(0,\cdot)=\overline{c}\}$, one has 
\begin{equation} \label{eq:f'} f'\big( \overline{u}_0 (x)\big) =- \overline{p}_t (0,x) / \overline{p}(0,x)\end{equation}
and the left-hand side belongs to $L^p_{loc} (\Omega)$. 
\end{theorem*}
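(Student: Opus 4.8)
\emph{Step 1: existence.} The plan is to run the direct method in the weak-$*$ topology of $L^\infty(\O)$. The admissible set $\mathcal A$ is non-empty (since $0<m<|\O|$), convex, and weak-$*$ closed and bounded, hence weak-$*$ compact, so it is enough to show that $\mathcal J_T$ is weak-$*$ continuous on $\mathcal A$. Given $u_0^n\rightharpoonup u_0$ weak-$*$, I would write the solutions in Duhamel form $u^n(t)=e^{t\Delta}u_0^n+\int_0^t e^{(t-s)\Delta}f(u^n(s))\,ds$, and use that the solutions stay valued in $[0,1]$ (so that the $f(u^n)$ are uniformly bounded), that the heat semigroup turns weak-$*$ convergence of $u_0^n$ into strong $L^1$ convergence of $e^{t\Delta}u_0^n$ for every $t>0$, and a standard Duhamel/compactness bootstrap, to obtain $u^n(T,\cdot)\to u(T,\cdot)$ in $L^1(\O)$ and hence $\mathcal J_T(u_0^n)\to\mathcal J_T(u_0)$. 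A maximiser $\overline u_0$ therefore exists.

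\emph{Step 2: first-order conditions (items i)--iii)).} Starting from \eqref{Derivative:J_T}, optimality of $\overline u_0$ means $\int_\O h_0\,\overline p(0,\cdot)\le0$ for every $h_0$ in the tangent cone to $\mathcal A$ at $\overline u_0$, i.e. for every $h_0\in L^\infty(\O)$ with $\int_\O h_0=0$, $h_0\ge0$ a.e. on $\{\overline u_0=0\}$ and $h_0\le0$ a.e. on $\{\overline u_0=1\}$. This is a Lagrange-multiplier/``bathtub'' configuration: choosing $h_0$ supported where $\overline u_0$ takes values in $(0,1)$ forces $\overline p(0,\cdot)$ to be constant there, and comparing with its values on $\{\overline u_0=0\}$ and $\{\overline u_0=1\}$ produces a threshold $\overline c$ with $\overline p(0,\cdot)=\overline c$ on $\{0<\overline u_0<1\}$, $\overline p(0,\cdot)\le\overline c$ on $\{\overline u_0=0\}$ and $\overline p(0,\cdot)\ge\overline c$ on $\{\overline u_0=1\}$; this is exactly the contrapositive of i)--iii). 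That $\overline c\ge0$ follows from $\overline p>0$ on $[0,T)\times\O$: after reversing time, $\overline p$ solves a forward linear parabolic Neumann problem with bounded zeroth-order coefficient $f'(\overline u)$ and positive initial datum $1$, so the strong maximum principle applies; in particular $\overline p(0,\cdot)$ is locally bounded below by a positive constant.

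\emph{Step 3: the refined identity \eqref{eq:f'}.} Set $E:=\{\overline p(0,\cdot)=\overline c\}$ and assume $|E|>0$ (the claim is vacuous otherwise, and $\overline c>0$ in this case). Since $f\in\mathscr C^2$ and $\overline u$ is bounded, $f'(\overline u)\in L^\infty$, so parabolic $L^q$-regularity gives $\overline p\in W^{2,1}_q$ locally in $[0,T)\times\O$ for every finite $q$; in particular $\overline p(0,\cdot)\in W^{2,q}_{loc}(\O)$ and, evaluating the adjoint equation at $t=0$,
\begin{equation*}
-\partial_t\overline p(0,x)-\Delta\overline p(0,x)=f'\big(\overline{u}_0(x)\big)\,\overline p(0,x)\qquad\text{for a.e. }x\in\O.
\end{equation*}
I would then invoke the elementary fact that a $W^{1,1}_{loc}$ function has vanishing gradient a.e. on each of its level sets, applied first to $\overline p(0,\cdot)$ and then to each $\partial_i\overline p(0,\cdot)$ on the set $\{\partial_i\overline p(0,\cdot)=0\}\supseteq E$; this yields $D^2\overline p(0,\cdot)=0$, hence $\Delta\overline p(0,\cdot)=0$, a.e. on $E$. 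Substituting into the displayed identity and dividing by $\overline p(0,x)=\overline c>0$ gives $f'(\overline{u}_0(x))=-\overline p_t(0,x)/\overline p(0,x)$ a.e. on $E$; the right-hand side lies in $L^p_{loc}(\O)$ since $\partial_t\overline p(0,\cdot)\in L^p_{loc}$ and $\overline p(0,\cdot)$ is locally bounded below away from $0$ (Step 2), and hence so does $f'(\overline{u}_0)$. \textbf{The main obstacle is this last step}: one must rigorously justify evaluating the backward parabolic equation for $\overline p$ at the initial time --- i.e. that $\partial_t\overline p(0,\cdot)$ and $\Delta\overline p(0,\cdot)$ exist as $L^q_{loc}$ functions satisfying the equation a.e. up to $t=0$, for coefficients that are merely $L^\infty$ --- together with the level-set cancellation $\Delta\overline p(0,\cdot)=0$ a.e. on $E$; by comparison, Steps 1--2 are soft.
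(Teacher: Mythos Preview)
The theorem you are attempting to prove is not proved in this paper at all: it is quoted from the earlier work \cite{NadinToledo} and stated here without proof, as background (``The main result in \cite{NadinToledo} states the following''). Consequently there is no proof in the present paper to compare your proposal against.

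That said, your outline is a reasonable reconstruction of how such a result is typically obtained. Step~1 (direct method using weak-$*$ compactness of $\mathcal A$ and continuity of the solution map) and Step~2 (bathtub/Lagrange-multiplier characterisation from \eqref{Derivative:J_T}, positivity of $\overline p$ via the maximum principle) are standard and essentially correct as sketched. For Step~3 your strategy --- parabolic $L^q$ regularity for $\overline p$, evaluating the adjoint equation at $t=0$, and using the Stampacchia-type fact that Sobolev functions have vanishing derivatives a.e.\ on level sets to kill the $\Delta\overline p(0,\cdot)$ term --- is exactly the kind of argument one expects, and you correctly flag the genuine technical point: since the zeroth-order coefficient $f'(\overline u)$ is merely $L^\infty$ near $t=0$ (because $\overline u_0$ is only $L^\infty$), one must check that $\overline p$ retains enough regularity up to $t=0$ for $\Delta\overline p(0,\cdot)$ and $\partial_t\overline p(0,\cdot)$ to make sense in $L^q_{loc}$ and for the equation to hold there. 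This is handled in \cite{NadinToledo}; the present paper simply imports the conclusion.
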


The characterisation of $\overline{u}_0$ with the help of $p$ is almost complete here, except on the singular arc $\omega=\{0<u_0<1\}$. Note first that this singular arc might have a positive measure. It was even proved in \cite{NadinToledo} that, if $f$ is concave, then $\omega \equiv \Omega$. 
If $f'$ is monotonic, equation (\ref{eq:f'}) admits a unique solution and thus fully characterizes $\overline{u}_{0}$. But for a bistable nonlinearity $f_\theta(u)=u(1-u)(u-\theta)$, equation (\ref{eq:f'}) might have two solutions, one belonging to $[0;\eta]$ and the other to $(\eta;1]$, where $\eta=\eta(\theta)\in (0;1)$ is the unique real number such that $f_\theta$ is convex in $[0;\eta)$ and concave in $(\eta;1]$. It is then necessary to distinguish between these two possible roots in order to completely characterize $\overline{u}_{0}$ with $p$. 

  From the numerical point of view, the characterisation given by this result naturally leads to a gradient descent algorithm, which is not well-posed if we are not able to characterise $\overline{u}_{0}$ on $\omega$. Let us briefly describe this algorithm, which we detail further in section \ref{sec:numerics} of the present paper, to explain the core difficulty and how our theoretical results enable us to bypass it: starting from an initial configuration $u_0^0$, we seek to improve it to obtain a better admissible candidate $u_0^1$. We first compute the adjoint state $p_0^0$ associated with $u_0^0$. The problem arises if $p_0^0$  has "flat zones", in other words if there exists $c_0$ (necessarily unique) such that
$$\left\{p_0^0>c_0\}\right|<m\,, \left|\{p_0^0\geq c_0\}\right|>m\,, \left|\{p_0^0=c_0\}\right|>0.$$
On the set $\{p_0^0>c_0\}$, we replace $u_0^0$ by 1, while on $\{p_0^0<c_0\}$ we replace $u_0^0$ by zero. On $\{p_0^0=c_0\}$ we must replace $u_0^0$ with a root of \eqref{eq:f'}.  \eqref{eq:f'} can have two roots $\mu_1^0$ and $\mu_2^0$. These roots can be distinguished by the convexity of $f$: up to a relabelling, $f''(\mu_1^0)> 0$ and $f''(\mu_2^0)\leq0$. In \cite{NadinToledo}, the two possibilities were explored successively, which led to high computational costs. This was the main limitation of the numerical approach of \cite{NadinToledo}. Theorem \ref{Prop1} of the present paper shows that one should choose $\mu_2^0$. This significantly improves the running time of our algorithm and we refer to section \ref{sec:numerics} for examples.
\color{black}


\subsubsection{Related works}
A related problem has first been addressed by Garnier, Hamel and Roques in \cite{GHR}, where the authors consider a bistable reaction term $f(u):= u(1-u)(u-\theta)$, with $\theta\in (0,1)$, over the full line $\Omega=\R$. In this earlier paper, the authors did not investigate $\eqref{Eq:Pv}$, but they tried to optimize the initial datum in order to ensure the convergence to $u\equiv 1$ when $t\to +\infty$. They investigated numerically the particular case $u_{0}:= \mathds{1}_{(-\alpha-\frac{m}2,-\alpha)}+\mathds{1}_{(\alpha,\alpha+\frac{m}2)}$, and proved that in some situations the initial datum associated with $\alpha=0$ might lead to extinction (that is, $u(t,x)\to 0$ as $t\to +\infty$), while a positive $\alpha>0$ might lead to persistence (that is, $u(t,x)\to 1$ as $t\to +\infty$). Also, numerics for more general classes of initial datum indicate that fragmentation might favor species persistence. Hence, even if the problem we consider here is a bit different, we expect the maximiser to be fragmented, that is, non-smooth, for bistable nonlinearities.

More recently, this problem was also addressed in \cite{Elie}, in a slightly more general form, and for the criterion $\int_{\Omega}|1-u(T,x)|^{2}dx$. These authors investigated in particular various conditions ensuring that the maximiser $\overline{u}_{0}$ is constant with respect to $x$, and, reciprocally, that the constant initial datum is a local maximiser. 

\bigskip

\subsection{Main results of the paper}
The main contributions of this paper are the following:
\begin{itemize}
\item When $u_0$ does not saturate the $L^\infty$ constraints (i.e. when the set $\omega:=\{0<u_0<1\}$ has positive measure), we prove in theorem \ref{Prop1}  that any maximiser $\overline u_0$ must necessarily be in a zone of concavity of $f$: in $\omega$, $f''(\overline u_0)\leq 0$.
\item When $f$ is convex, theorem \ref{Th:Convex} characterizes explicitely a global maximiser, using rearrangement techniques. Here, the presence of Neumann boundary conditions prohibits using in a straightforward manner the results of \cite{Bandle}, and we need to adapt some points of the proof to this case.
\item When $f$ is a bistable non-linearity, we improve the algorithm initially introduced in \cite{NadinToledo} and display several numerical simulations. One-dimensional simulations are displayed that exemplify the fact that theorem \ref{Prop1} significantly improves the computational time of optimisation algorithms. We also provide two-dimensional simulations.
\end{itemize}

\subsubsection{Characterisation of the singular arc}

Let us first recall the expression of the second order derivative \cite{NadinToledo}:
   
  \begin{eqnarray}
  \left\langle \nabla^2 \mathcal{J}_T(u_0),h_0 \right\rangle & = &\iint_{(0;T)\times \O}f''(u(t,x))p(t,x)h^2(t,x) \ dx \ dt,
\end{eqnarray}
where $p$ solves \eqref{eq:rd_p} and $h$ solves 
\begin{equation}\label{Eq:h}\begin{cases}
\frac{\partial h}{\partial t}-\Delta h=f'(u)h\text{ in }(0;T)\times \O\,, 
\\ h(0,x)=h_0(x)\,, 
\\ \frac{\partial h}{\partial \nu}=0\text{ on }(0;T)\times \partial \O.\end{cases}\end{equation}

We can now state our main result:
\begin{theorem} \label{Prop1}
Assume $\O=(0;\pi)$.
Let  $\overline{u}_0$ be a solution of  \eqref{Eq:Pv}. If the set $\Omega_{\overline{c}}:=\{x\in \Omega: 0<\overline{u}_{0}(x)<1\}$ has a positive measure then, for   almost every \color{black} interior point $x$ of $\Omega_{\overline{c}}$, there holds
\begin{equation}f''(\overline u_0(x))\leq 0.\end{equation}
\end{theorem}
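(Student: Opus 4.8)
The plan is to contradict the second order optimality condition by testing it against highly oscillating perturbations supported near a putative point of positive concavity of $f\circ\overline u_0$, evaluating the resulting quadratic form through a two-scale asymptotic expansion.

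\textbf{Reduction.} First I would record two preliminary facts: by the strong maximum principle for \eqref{eq:rd_p} (terminal datum $\equiv1$) one has $\overline p>0$ on $[0,T)\times\overline\Omega$, and since $\overline p(0,\cdot)\equiv\overline c$ on $\Omega_{\overline c}$ this forces $\overline c>0$; moreover, by parabolic regularity, $\overline p$ is continuous up to $t=0$. Next I fix an interior point $x_0$ of $\Omega_{\overline c}$ that is a Lebesgue point of $f''(\overline u_0)$ (a.e.\ point qualifies), together with $r>0$ and $\delta_1>0$ such that $\overline{B(x_0,r)}\subset\Omega$ and $\delta_1\le\overline u_0\le1-\delta_1$ a.e.\ on $B(x_0,r)$; this uses the standing regularity hypothesis on $\overline u_0$, and it is also where the restriction to \emph{interior} points enters, since a \emph{smooth} admissible perturbation must be supported in an open subset of $\Omega_{\overline c}$ on which $\overline u_0$ stays away from $0$ and $1$. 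For $\psi\in\mathscr C^\infty_c(B(x_0,r))$ and small $\e>0$ I set
\[
h_0^\e:=\psi\cos(\cdot/\e)-c_\e\chi,\qquad c_\e:=\int_\Omega\psi(x)\cos(x/\e)\,dx,
\]
where $\chi\in\mathscr C^\infty_c(B(x_0,r))$ is fixed with $\int_\Omega\chi=1$, and $c_\e=O(\e^N)$ for every $N$ by repeated integration by parts. Then $\int_\Omega h_0^\e=0$ and $\|h_0^\e\|_{L^\infty}$ is bounded in $\e$, so $\overline u_0\pm t\,h_0^\e\in\mathcal A$ for $t$ small; moreover $\langle\nabla\mathcal J_T(\overline u_0),h_0^\e\rangle=\overline c\int_\Omega h_0^\e=0$ by \eqref{Derivative:J_T} (as $\operatorname{supp}h_0^\e\subset\Omega_{\overline c}$, where $\overline p(0,\cdot)=\overline c$), so a second order Taylor expansion of $t\mapsto\mathcal J_T(\overline u_0+t\,h_0^\e)$ at $0$, together with the global maximality of $\overline u_0$, gives $\langle\nabla^2\mathcal J_T(\overline u_0),h_0^\e\rangle\le0$.

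\textbf{The two-scale expansion.} Let $h^\e$ solve \eqref{Eq:h} with datum $h_0^\e$. The core estimate is
\[
h^\e(t,x)=e^{-t/\e^2}\,\psi(x)\cos(x/\e)+R^\e(t,x),\qquad \|h^\e(t,\cdot)\|_{L^2(\Omega)}^2\lesssim e^{-c\,t/\e^2}\|h_0^\e\|_{L^2(\Omega)}^2,
\]
for some $c>0$, with $\|R^\e(t,\cdot)\|_{L^2(\Omega)}=o(\e)$ uniformly on $[0,S\e^2]$ for each fixed $S>0$. Heuristically, after the time rescaling $s=t/\e^2$ the diffusive length $\sqrt t$ matches the oscillation length $\e$: $\cos(\cdot/\e)$ is then an eigenfunction of $-\Delta$ with eigenvalue $\e^{-2}$, decaying like $e^{-s}$; the macroscopic amplitude $\psi$ is frozen; and the first order corrector (carrying a factor $\e\,s\,e^{-s}\sin(\cdot/\e)$) as well as the reaction term $f'(\overline u)h^\e$ contribute only at order $\e^2$. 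I would justify this either through the matched ansatz $h^\e=e^{-t/\e^2}\big(H_0(t,t/\e^2,x,x/\e)+\e H_1+\dots\big)$ — the $\e^{-2}$-order equation being a heat equation in the fast variables that forces $H_0=e^{-s}\psi(x)\cos y$, the $\e^{-1}$-order equation determining $H_1$ — or, more directly, from the Duhamel representation for \eqref{Eq:h} together with the Fourier (Neumann) expansion of $\psi$, using that smoothness of $\psi$ makes the low-frequency content of $\psi\cos(\cdot/\e)$ of size $O(\e^N)$, so that both the decay and the approximation are inherited from the explicit heat semigroup.

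\textbf{Passing to the limit and conclusion.} Inserting the expansion into the second order derivative formula recalled above and splitting $\int_0^T$ at $t=\e^2 S$, the contribution from $t\le\e^2 S$ converges, after the change of variables $t=\e^2 s$ and using $\cos^2(\cdot/\e)\rightharpoonup\tfrac12$, $\overline p(\e^2 s,\cdot)\to\overline c$ on $\operatorname{supp}\psi$, $f''(\overline u(\e^2 s,\cdot))\to f''(\overline u_0)$ in $L^1$, to $\tfrac{\e^2\overline c}{4}\big(\int_\Omega f''(\overline u_0)\psi^2\big)(1-e^{-2S})$; while the contribution from $t\ge\e^2 S$ is $O(\e^2 e^{-S})$ by the decay estimate, so that
\[
\big\langle\nabla^2\mathcal J_T(\overline u_0),h_0^\e\big\rangle=\frac{\e^2\overline c}{4}\Big(\int_\Omega f''(\overline u_0)\psi^2\Big)(1-e^{-2S})+o_\e(\e^2)+O(\e^2 e^{-S}).
\]
Dividing by $\e^2\overline c/4>0$, letting $\e\to0$ and then $S\to+\infty$, and using Step 1, gives $\int_\Omega f''(\overline u_0)\psi^2\le0$ for every $\psi\in\mathscr C^\infty_c(B(x_0,r))$; hence $f''(\overline u_0)\le0$ a.e.\ on $B(x_0,r)$, and in particular $f''(\overline u_0(x_0))\le0$. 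Since a.e.\ interior point of $\Omega_{\overline c}$ is such an $x_0$, the statement follows. The main obstacle is clearly the rigorous justification of Step 2 — the uniform $o(\e)$ error bound and the high-frequency decay estimate for the linearised equation \eqref{Eq:h} with its variable coefficient $f'(\overline u)$ and Neumann boundary condition; the admissibility bookkeeping, the weak convergence of $\cos^2(\cdot/\e)$, and the final localisation argument are routine.
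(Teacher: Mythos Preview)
Your approach is essentially the paper's: test the second order optimality condition against $\psi(x)\cos(x/\e)$ (the paper writes $\theta(x)\cos(kx)$ with $k=1/\e$), perform a two-scale expansion whose leading term is exactly your $e^{-t/\e^2}\psi(x)\cos(x/\e)$, and use Laplace-type concentration at $t=0$ together with $\cos^2(\cdot/\e)\rightharpoonup\tfrac12$ to recover $\int_\Omega f''(\overline u_0)\,\overline p(0,\cdot)\,\psi^2\le0$. The paper carries out the remainder estimate by explicitly computing the first corrector $h_k^1$ (your $H_1$) and bounding the residual in $L^2$ via an energy/Gronwall argument, which is precisely the ``rigorous justification of Step~2'' you flag as the main task.

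One point deserves correction. You write that finding $r,\delta_1>0$ with $\delta_1\le\overline u_0\le1-\delta_1$ a.e.\ on $B(x_0,r)$ ``uses the standing regularity hypothesis on $\overline u_0$''. There is no such hypothesis: $\overline u_0$ is only known to be in $L^\infty$, and an interior point of $\{0<\overline u_0<1\}$ need not admit such a neighbourhood. The paper handles this with a short approximation lemma: for any $h_0\in L^\infty$ supported in $\Omega_{\overline c}$ with $\int_\Omega h_0=0$, the second order condition $\iint f''(\overline u)\,\overline p\,h^2\le0$ still holds, by truncating $h_0$ to $F_n:=\{1/n<\overline u_0<1-1/n\}$ (on which perturbations are genuinely admissible), correcting the mean, and passing to the limit in $L^2$. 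With this in hand your $\delta_1$ is unnecessary and your argument goes through as written; without it, the admissibility of $\overline u_0\pm t\,h_0^\e$ is not guaranteed.
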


\begin{remark}
The method we put forth is reminiscent of one that was used in \cite{NadinToledo} to study the case of a constant initial condition and to prove that such a constant $u_0$ was always a maximiser in the case of monostable non-linearities. Here, working with interior point greatly complexifies the situation and calls for two-scale asymptotic expansions.

The main drawback to our approach is that it can not cover the case of singular (e.g. Cantor-like) singular arcs, and it is a very interesting question to prove that such a property holds for any point of the singular arc $\Omega_{\overline c}$. We comment on the main difficulties of this approach in the conclusion and only state, for the moment, that the main problem is related to the ubiquitous problem of separation of phases in homogenisation \cite{AllaireBriane}.\end{remark}

\subsubsection{Convex non-linearities and rearrangements}
We present, in this section, a characterisation of maximisers when the non-linearity $f$ is convex, using rearrangement and symmetrisation techniques. It should be noted that, since we are working with Neumann boundary conditions, it is not possible to use directly well-known parabolic isoperimetric inequalities \cite{Bandle,RakotosonMossino,Alvino1990}. We refer to \cite{M2021,Bandle,Rakotoson} and the references therein for an introduction to parabolic isoperimetric inequalities, and only underline here that  the most precise results available in the literature only encompass the case of Dirichlet boundary conditions. For Neumann boundary conditions, a large literature \cite{Bramanti1991,Ferone2005,Maderna1979} is devoted to such questions. Usually, it involves a comparison of the solution with the solution of a Dirichlet, or of a mixed Dirichlet-Neumann problem, and makes use of constants appearing in relative isoperimetric inequalities. It is unclear whether these comparison results could be used in our case. Since we are working in the one-dimensional case, a direct adaptation of the proof of \cite{Bandle} yields the required results.

 In order to state our result, let us introduce the following notation: let $\tilde u_0$ be defined as 
\begin{equation}\tilde u_0:=\mathds 1_{(0;m)}\in \mathcal A.\end{equation}

\begin{theorem}\label{Th:Convex}
Assume $f$ is a convex, $\mathscr C^1$ function such that $f(0)=0$. Then $\tilde u_0$ is a solution of \eqref{Eq:Pv}.
\end{theorem}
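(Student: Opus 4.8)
The plan is to prove the stronger fact that $\mathcal J_T(u_0)\le\mathcal J_T(\tilde u_0)$ for \emph{every} $u_0\in\mathcal A$, by adapting the parabolic symmetrisation argument of \cite{Bandle} to $\O=(0;\pi)$ with Neumann boundary conditions. First I would record two preliminary observations. Since $f(0)=0$, the constant $0$ solves \eqref{eq:rd_Gral}, so $u\ge0$ by the comparison principle; and $u\le1$ as well (here one uses $f(1)\le0$, which is forced by the fact that \eqref{eq:rd_Gral} is well posed with $f$ defined on $[0;1]$), and the same bounds hold for the solution $\tilde u$ issued from $\tilde u_0$. Next, from the elementary inequality $(s-\tau)_+\le s(1-\tau)$ valid for $s,\tau\in[0;1]$, one gets for every $u_0\in\mathcal A$ and every $\tau\in[0;1]$
\[
\int_\O\big(u_0(x)-\tau\big)_+\,dx\ \le\ (1-\tau)\int_\O u_0\ =\ m(1-\tau)\ =\ \int_\O\big(\tilde u_0(x)-\tau\big)_+\,dx,
\]
i.e. $\tilde u_0$ has the largest ``truncated masses'' in the admissible class.

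For $t\ge0$ and $\tau\ge0$ set $\Psi(t,\tau):=\int_\O(u(t,x)-\tau)_+\,dx$ and $\tilde\Psi(t,\tau):=\int_\O(\tilde u(t,x)-\tau)_+\,dx$. By non-negativity $\Psi(T,0)=\int_\O u(T,\cdot)=\mathcal J_T(u_0)$ and $\tilde\Psi(T,0)=\mathcal J_T(\tilde u_0)$, so it suffices to prove the parabolic isoperimetric inequality $\Psi(t,\tau)\le\tilde\Psi(t,\tau)$ for all $t\in[0;T]$ and $\tau\ge0$. To this end I would follow Bandle's scheme. Differentiating, $\partial_t\Psi(t,\tau)=\int_{\{u(t)>\tau\}}u_{xx}\,dx+\int_{\{u(t)>\tau\}}f(u)\,dx$. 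In dimension one the co-area formula together with the Cauchy--Schwarz inequality gives $\int_{\{u(t)>\tau\}}u_{xx}\,dx\le -1/\partial^2_{\tau\tau}\Psi(t,\tau)$, with equality whenever $\{u(t,\cdot)=\tau\}$ is a single point; and writing $f(u)=f(\tau)+\int_\tau^u f'$ and integrating by parts in the level parameter one obtains $\int_{\{u(t)>\tau\}}f(u)\,dx=\mathcal R[\Psi](t,\tau)$ with
\[
\mathcal R[V](t,\tau):=-f(\tau)\,\partial_\tau V(t,\tau)+f'(\tau)\,V(t,\tau)+\int_\tau^1 V(t,\sigma)\,df'(\sigma),
\]
$df'$ being the Stieltjes measure of the non-decreasing function $f'$. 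Thus $\Psi$ is a subsolution of the degenerate parabolic problem $\partial_t V=-\big(\partial^2_{\tau\tau}V\big)^{-1}+\mathcal R[V]$ on $(0;T)\times(0;1)$, with $V(t,1)=0$ and with the behaviour at $\tau=0$ dictated by $|\{u(t)>0\}|=|\O|$ for $t>0$. The reason for comparing with $\tilde u_0$ is that $\tilde u(t,\cdot)$ remains non-increasing on $(0;\pi)$ for all $t$: the function $w:=\partial_x\tilde u$ solves $w_t-w_{xx}=f'(\tilde u)w$ with $w=0$ on $\partial\O$ --- which is exactly the Neumann condition --- and $w(0,\cdot)\le0$, so $w\le0$ by the maximum principle and $w<0$ in the interior for $t>0$; hence every level set of $\tilde u(t,\cdot)$ is a single point, the co-area estimate becomes an equality, and $\tilde\Psi$ solves the above problem exactly. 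Since $\Psi(0,\cdot)\le\tilde\Psi(0,\cdot)$ by the first paragraph, a comparison principle yields $\Psi\le\tilde\Psi$, and evaluating at $t=T$, $\tau=0$ finishes the proof.

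The hard part is the comparison principle for this degenerate problem, and more precisely the treatment of the reaction operator $\mathcal R$: it contains the nonlocal term $\int_\tau^1 V(t,\sigma)\,df'(\sigma)$, and the comparison principle applies precisely because this term is monotone non-decreasing in $V$ --- which is exactly the convexity of $f$, i.e. $df'\ge0$. This is where the hypotheses enter: convexity of $f$ for the quasimonotonicity of $\mathcal R$, and $f(0)=0$ for the non-negativity of $u$. Two further technical points, both handled by a careful reading of \cite{Bandle}, are the passage from Dirichlet to Neumann conditions --- done here by reflecting $\O$ evenly across its two endpoints, which is innocuous in dimension one but, as recalled in the discussion preceding the statement, has no satisfactory higher-dimensional counterpart (see \cite{Bramanti1991,Ferone2005,Maderna1979}) --- and the justification of the formal differentiations above, obtained by parabolic regularisation and by approximating $f$ in $\mathscr C^1$ by smooth convex functions before passing to the limit.
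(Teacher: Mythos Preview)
Your argument is correct and stays within Bandle's framework, but it follows a different implementation from the paper's, and it is worth spelling out the comparison.

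The paper proceeds in three steps: (i) it first uses the convexity of $f$ to show that $\mathcal J_T$ itself is convex, so one may restrict to extremal points of $\mathcal A$, i.e.\ bang-bang initial data; (ii) it extends the Neumann problem on $(0;\pi)$ by even reflection to the torus $\mathbb T$, where the Schwarz rearrangement and the ordinary isoperimetric inequality are available; (iii) it proves the parabolic comparison $\int_{-r}^r v\ge\int_{-r}^r u^\star$ via a Picard time-discretisation, reducing at each step to an elliptic Talenti inequality on $\mathbb T$, and passes to the limit. Your route bypasses (i) entirely (your truncated-mass inequality $\int_\O(u_0-\tau)_+\le m(1-\tau)=\int_\O(\tilde u_0-\tau)_+$ works for all $u_0\in\mathcal A$, not only bang-bang ones), works directly on the Neumann interval using the monotone rearrangement and the relative isoperimetric constant $1$ rather than the torus constant $2$, and replaces the Picard iteration by the continuous-time differential inequality $\partial_t\Psi\le -1/\partial^2_{\tau\tau}\Psi+\mathcal R[\Psi]$ together with the observation that monotonicity of $\tilde u(t,\cdot)$ (preserved by the equation, as you show) makes $\tilde\Psi$ an exact solution. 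The trade-off is clear: the paper's elliptic-step approach is more elementary and self-contained, whereas your approach is more streamlined but leans on a comparison principle for a degenerate, nonlocal parabolic problem whose proof you defer to \cite{Bandle}. Your identification of the convexity of $f$ as precisely what makes the nonlocal term $\int_\tau^1 V\,df'$ order-preserving is the right structural point, and the maximum-principle argument for $w=\partial_x\tilde u$ with Dirichlet conditions inherited from Neumann is correct. The only loose end in your write-up is that you mention reflection ``across its two endpoints'' as a technical device at the end, while your main argument does not actually use it; this is harmless but could be tightened.
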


\begin{remark}
\begin{itemize}
\item It should be noted that $\tilde u_0$ appears as the solution of many other optimisation problems in population dynamics, most specifically for the monostable case \cite{BHR,KaoLouYanagida,MNP} with Neumann boundary conditions.
\item The maximiser $\mathds 1_{(0;m)}$ is clearly not unique, since $\mathds 1_{(1-m;1)}$ is also a maximiser for example. This provides an example of non-uniqueness of the maximiser. 

\item As a corollary, $\tilde u_0:=\mathds 1_{(0;m)}$ is a minimiser of $\mathcal{J}_{T}$ over $\mathcal{A}$ if $f$ is concave. 
\end{itemize}
\end{remark}

\section{Proof of Theorem \ref{Prop1}}

\subsection{Notations, plan of the proof and first simplification}
\paragraph{Second order optimality conditions}
We recall the expression of the second order derivative of $\mathcal J$: for an admissible perturbation $h_0$, we have
$$\left\langle \nabla^2 \mathcal{J}_T(u_0),h_0 \right\rangle  =  \int_{0}^{T} \int_{\Omega}f''(u(t,x))p(t,x)h^2(t,x) \ dx \ dt,$$ where $p$ solves \eqref{eq:rd_p} and $h$ solves \eqref{Eq:h}. 
 
Let $\overline u_0$ be a solution of \eqref{Eq:Pv}. We assume that the set $\Omega_{\overline c}:=\{0<\overline u_0<1\}$ has a non-empty interior and we want to prove that $f''(\overline u_0)\leq 0$ almost everywhere on the interior of this set.
To do this, we need the following expression of second order optimality conditions:

\begin{lemma}\label{Le:OOC}For every  $h_{0}\in L^{\infty}(\Omega)$ supported in $\Omega_{\overline c}$, such that $\int_{\Omega}h_{0}=0$, there holds
\begin{equation}\label{Eq:Rev0} \iint_\OT f''(u(t,x))p(t,x)h^2(t,x) \ dx \ dt\leq 0\end{equation} where $h$ is the solution of \eqref{Eq:h} associated with the initial condition $h_0$.\end{lemma}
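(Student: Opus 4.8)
The plan is to derive the claimed inequality \eqref{Eq:Rev0} directly from the second order optimality conditions together with the first order conditions recalled from \cite{NadinToledo}. The key point is that any perturbation $h_0 \in L^\infty(\Omega)$ supported in $\Omega_{\overline c} = \{0 < \overline u_0 < 1\}$ with $\int_\Omega h_0 = 0$ is an \emph{admissible bilateral perturbation}: since $0 < \overline u_0 < 1$ on its support, for $\varepsilon$ small enough both $\overline u_0 + \varepsilon h_0$ and $\overline u_0 - \varepsilon h_0$ remain in $[0,1]$ a.e., and the zero-average condition preserves the $L^1$ constraint $\int_\Omega u_0 = m$. Hence $\pm h_0$ both belong to the tangent cone $\mathcal A$ at $\overline u_0$. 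First I would spell out this admissibility statement carefully (including, if needed, a truncation argument to handle the fact that $\Omega_{\overline c}$ may only be defined up to null sets, using that $h_0$ is supported in the \emph{interior} region or, more safely, replacing $\Omega_{\overline c}$ by $\{\delta \le \overline u_0 \le 1 - \delta\}$ and letting $\delta \to 0$).

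Second, I would invoke first order optimality. Because $\overline u_0$ is a maximiser and $\pm h_0$ are both admissible directions, the first order Gâteaux derivative must vanish: $\langle \nabla \mathcal J_T(\overline u_0), h_0\rangle = 0$. Equivalently, using \eqref{Derivative:J_T}, $\int_\Omega h_0(x)\overline p(0,x)\,dx = 0$; this is automatic anyway from item (i) of the recalled theorem, which gives $\overline p(0,\cdot) \equiv \overline c$ on $\Omega_{\overline c}$, combined with $\int_\Omega h_0 = 0$. Having a vanishing first derivative in both directions $\pm h_0$, the second order necessary condition for a maximum then forces the second derivative to be nonpositive: $\langle \nabla^2 \mathcal J_T(\overline u_0), h_0 \rangle \le 0$. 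Writing this out with the recalled formula for the second derivative gives precisely
\begin{equation*}
\iint_{(0;T)\times\Omega} f''(\overline u(t,x))\,\overline p(t,x)\,h^2(t,x)\,dx\,dt \le 0,
\end{equation*}
where $h$ solves \eqref{Eq:h} with $h(0,\cdot) = h_0$, which is the claim.

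The one genuinely delicate point — and the step I expect to be the main obstacle — is justifying that the second order necessary condition really reads as a clean sign condition on $\langle \nabla^2 \mathcal J_T(\overline u_0), h_0\rangle$ without extra remainder terms, i.e. that the Taylor expansion $\mathcal J_T(\overline u_0 + \varepsilon h_0) = \mathcal J_T(\overline u_0) + \varepsilon \langle \nabla \mathcal J_T(\overline u_0), h_0\rangle + \tfrac{\varepsilon^2}{2}\langle \nabla^2 \mathcal J_T(\overline u_0), h_0\rangle + o(\varepsilon^2)$ is valid for this class of perturbations. This requires sufficient smoothness/differentiability of the control-to-state map $u_0 \mapsto u$ and of $u_0 \mapsto \mathcal J_T(u_0)$, which follows from standard parabolic regularity and the $\mathscr C^2$ assumption on $f$ (the state $u$ stays bounded in $[0,1]$ by the maximum principle, so $f', f''$ are evaluated on a compact set); I would cite the derivation of these formulas in \cite{NadinToledo}. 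The remaining subtlety is the passage to the interior of $\Omega_{\overline c}$ and the $\delta \to 0$ limit in the truncation, for which one notes that the integrand $f''(\overline u)\overline p\, h^2$ depends continuously on $h_0$ in a suitable topology (via well-posedness of \eqref{Eq:h}), so the inequality is stable under the approximation. Once Lemma \ref{Le:OOC} is in hand, the two-scale expansion machinery developed in the subsequent sections will exploit it by choosing highly oscillatory $h_0$.
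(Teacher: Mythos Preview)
Your proposal is correct and mirrors the paper's proof: truncate to $F_n=\{1/n<\overline u_0<1-1/n\}$, use bilateral admissibility there to obtain the second-order sign condition, then pass to the limit via $L^2$-continuity of the map $h_0\mapsto h$ from \eqref{Eq:h}. The only detail the paper makes explicit that you gloss over is that the truncated perturbation $h_0\mathds 1_{F_n}$ need not have zero mean, so one subtracts its average on $F_n$ before invoking optimality (the corrected sequence still converges to $h_0$ in $L^2$).
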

\begin{proof}[Proof of Lemma \ref{Le:OOC}]
We first notice the following thing: let, for any $n\in \N^*$, the set $F_n$ be defined as 
$$F_n:=\left\{\frac1n<u_0<1-\frac1n\right\}.$$ Then, for any $L^\infty$ function $h_0$ supported in $F_n$ (in the sense that $h_0\mathds 1_{F_n}=h_0$) such that $\int_\O h_0=0$, if $h$ is the solution of \eqref{Eq:h} associated with $h_0$, we have
$$ \iint_\OT f''(u(t,x))p(t,x)h^2(t,x) \ dx \ dt\leq 0.$$ This is a consequence of the fact that, for any $\tau$ such that $|\tau|$ is small enough, $\overline u_0+\tau h_0$ is an admissible initial condition.

Let us now consider $h_0\in L^\infty(\O_{\bar c})$ satisfying $\int_\O h_0=0$. We define, for any $n\in \N$, 
$$h_{0,n}:=\mathds 1_{F_n}\left(h_0-\frac{\int_{F_n}h_0}{|F_n|}\right).$$ For every $n\in \N$, $h_{0,n}$ is supported in $F_n$ and verifies $\int_{\O}h_{0,n}=0$.  Hence, defining, for any $n\in \N$, $h_n$ as the solution of \eqref{Eq:h} associated with the initial condition $h_{0,n}$ we have 
\begin{equation}\label{HU} \iint_\OT f''(u(t,x))p(t,x)h_n^2(t,x) \ dx \ dt\leq 0.\end{equation}
However, there holds
$$h_{0,n}\underset{n\to \infty}\rightarrow h_0\text{ in $L^2(\O)$},$$ which, by standard parabolic estimates, entails
$$h_n\underset{n\to \infty}\rightarrow h\text{ in $L^2(\OT)$}$$ where $h$ is the solution of \eqref{Eq:h} associated with the initial condition $h_0$. Passing to the limit $n\to \infty$ in \eqref{HU} yields the conclusion.
\end{proof}

Since we want to retrieve, from the second order optimality conditions \eqref{Eq:Rev0}, an information of $f''(\overline u_0)$, we need to find a perturbation $h_0$ such that the ensuing solution $h$ satisfies, roughly speaking, 
$$h^2(t,x)\approx C \delta_{t=t_0} g(x),$$  for a certain function $g$. As $h$ is a solution of a parabolic equation, one possibility to obtain such a behaviour is to choose a highly oscillating initial condition, say $h_0(x)=\cos(kx)$ with a large integer $k$. This would give $h^2(t,x)\approx e^{-tk^2}\cos(kx)^2$, which, thanks to the Laplace method, does concentrate around $t=0$ up to a proper rescaling. This however is not particularly convenient, as such a perturbation is not admissible: it is not supported in $\O_{\bar c}$. To overcome this difficulty, we need to truncate such highly oscillating perturbations, thus choosing a perturbation of the form $\theta(x)\cos(kx)$, with $\theta$ a cut-off function, leading to two-scale asymptotic expansions. The objective is to pick the correct function $\theta$.

In order to summarise our approach, let us fix notations: we pick an optimiser $\overline u_0$, we define $\O_{\bar c}:=\{0<\overline u_0<1\}$, and we set $\mathring{\O}_{\bar c}$ as the interior of $\O_{\bar c}$.  To prove Theorem \ref{Prop1} we argue by contradiction: assume that, for some $\delta>0$, 
\begin{equation}\label{Eq:Rut}\left|\{f''(\overline u_0)\geq \delta\}\cap \mathring{\O}_{\bar c}\right|>0.\end{equation} 
Since $\mathring{\O}_{\bar c}$ is an open set, we can write it as a union of intervals
\begin{equation}\mathring{\O}_{\bar c}=\bigcup_{k=0}^\infty (a_k;b_k).\end{equation} By \eqref{Eq:Rut}, there exists $n_0\in \N$ such that
\begin{equation*}\left|\{f''(\overline u_0)\geq \delta\}\cap (a_{n_0};b_{n_0})\right|>0,\end{equation*} so that there exists $\epsilon>0$ such that, for the same $n_0$, we have 
\begin{equation}\label{Eq:Rut2}\left|\{f''(\overline u_0)\geq \delta\}\cap (a_{n_0}+\epsilon;b_{n_0}-\epsilon)\right|>0.\end{equation} We fix such an $\epsilon>0$.

To alleviate notations, define $E:=\{f''(\overline u_0)\geq \delta\}\cap (a_{n_0}+\epsilon;b_{n_0}-\epsilon)$. As $p(0,\cdot)>0$ by the parabolic maximum principle, \eqref{Eq:Rut2} yields
\begin{equation}\label{Eq:TK}\int_{\O} f''(\overline u_0(\cdot))p(0,\cdot)\mathds 1_E>0.\end{equation}
We  approximate in $L^1(a_{n_0};b_{n_0})$ the function $\mathds 1_E$ by a sequence $\{\psi_k\}_{k\in \N}$ of uniformly bounded, non-negative, $\mathscr C^\infty$ functions that are compactly supported in $(a_{n_0};b_{n_0})\subset \mathring{\O}_{\bar c}$. In particular, the sequence $\{\psi_k^2\}_{k\in \N}$ also converges to $\mathds 1_E$ in $L^1(\O)$, so that \eqref{Eq:TK} implies that for $K$ large enough 
\begin{equation}\label{Eq:TK2}\int_{\O} f''(\overline u_0(\cdot))p(0,\cdot)\psi_K^2>0.\end{equation}
We fix $K$ large enough so that \eqref{Eq:TK2} holds and we set, for this index $K$, 
$$\theta:= \psi_K\in \mathscr C^\infty(\O).$$
The sequence of truncated, highly oscillating initial conditions is 
$$\overline h_{k,0}:=\theta(\cdot)\left(\cos(k\cdot)+\alpha_k\right)$$ where \begin{equation}\label{Eq:alpha}\alpha_k=-\frac{\int_{\O} \theta \cos(k\cdot)}{\int_\O \theta}\end{equation} simply ensures that $\int_\O \overline{h}_{k,0}=0$. This constant does not play a role in the upcoming analysis for the following reason:
\begin{enumerate}
\item First, by setting $h_{k,0}:=\theta(x)\cos(kx)$ and by defining $h_k$ as the solution of \eqref{Eq:h} associated with the initial condition $h_k^0$ we shall show that 
\begin{equation}\label{Eq:Climb}
\iint_{(0;T)\times \O} f''(u)ph_k^2\underset{k\to \infty}\sim\frac{C}{k^2}\int_\O f''(\overline u_0(\cdot))p(0,\cdot)\theta^2(\cdot)
\end{equation}
for some constant $C$. This is the core of the proof, and will take up the remainder of this section of the paper.

It is also immediate by parabolic regularity to obtain that the sequence $\{h_k\}_{k\in \N}$ is uniformly bounded in $L^2((0;T)\times \O)$.
\item Second we observe that, as $\theta\in \mathscr C^4$, the Riemann-Lebesgue lemma in particular ensure that $\alpha_k=\underset{k\to \infty}{\mathscr O}\left(\frac1{k^4}\right)$.
\item If we now set $z$ as the solution of \eqref{Eq:h} associated with the initial condition $\theta(\cdot)$, the solution $\overline h_k$ associated with the (admissible) initial condition $\overline h_{k,0}$ is given by $\overline h_{k}=h_k+\alpha_k z$. Then the second order derivative in the admissible direction $\overline h_{k,0}$ is given by
\begin{align*}
\left\langle \nabla^2 \mathcal{J}_T(u_0),\overline h_{k,0} \right\rangle  &=  \iint_\OT f''(u(t,x))p(t,x)\left(\overline h_k\right)^2(t,x) \ dx \ dt
\\&=  \iint_\OT f''(u(t,x))p(t,x) h_k^2(t,x) \ dx \ dt
\\&+2\alpha_k\iint_\OT f''(u(t,x))p(t,x)z(t,x)h_k(t,x)\ dx \ dt
\\&+\alpha_k^2\iint_\OT f''(u(t,x))p(t,x)z^2(t,x)\ dx \ dt
\end{align*}
Taking into account \eqref{Eq:Climb} and the fact that $\alpha_k=\underset{k\to \infty}{\mathscr O}\left(\frac1{k^4}\right)$ leads to 
$$\left\langle \nabla^2 \mathcal{J}_T(u_0),\overline h_{k,0} \right\rangle\underset{k\to \infty}\sim\frac{C}{k^2}\int_\O f''(\overline u_0(\cdot))p(0,\cdot)\theta^2(\cdot)>0,$$ a contradiction.
\item As a consequence, the theorem is proved, provided we can prove \eqref{Eq:Climb}, and we henceforth focus on this point. 
\end{enumerate}

%
%
%
%

\color{black}
%
%
%
 \subsection{Asymptotic expansion of $h_k$}  
  \color{black}
Let $\theta$ be given as above.
We consider the following sequence of equations: let, for any $k\in \N$, $h_k$ be the solution of
\begin{equation}\label{Eq:1}
\begin{cases}
\displaystyle\frac{\partial h_k}{\partial t}-\frac{\partial^2h_k}{\partial x^2}=f'({u})h_k\,, 
\\
\\\displaystyle\frac{\partial h_k}{\partial \nu}=0\,, 
\\
\\ h_k(0,x)=h_{k,0}(x)=\theta(x)\cos(kx).\end{cases}\end{equation}
In this context, it is natural \cite{Allaire} to look for a two-scale asymptotic expansion of $h_k$ of the form 
\begin{equation}\label{Eq:AsymptoticDevelopment}h_k(t,x)\approx h_k^0(k^2t,x,kx)+\frac1kh_k^1(k^2t,x,kx)+\dots\end{equation}
which, after a formal identification at the first and second order, gives the following equations on $h_k^0$ and $h_k^1$:
\begin{equation}\label{Eq:h0}
\begin{cases}
\displaystyle\frac{\partial h_k^0}{\partial s}-\frac{\partial^2 h_k^0}{\partial y^2}=0\,, 
\\
\\\displaystyle\frac{\partial h_k^0}{\partial \nu}=0\,, 
\\
\\h_k^0(0,x,y)=\theta(x)\cos(y).\end{cases}\end{equation}
and 
\begin{equation}\label{Eq:h1}
\begin{cases}
\displaystyle\frac{\partial h_k^1}{\partial s}-\frac{\partial^2 h_k^1}{\partial y^2}=2\frac{\partial^2h_k^0}{\partial x\partial y}, 
\\
\\\displaystyle\frac{\partial h_k^1}{\partial \nu}=0\,, 
\\
\\h_k^1(0,x,y)=0.\end{cases}\end{equation}
Equation \eqref{Eq:h0} can be solved explicitly, giving 
\begin{equation}\label{Eq:ExpH0}h_k^0(s,x,y)=\theta(x)\cos(y)e^{-s}.\end{equation}
This, in turn, allows to solve equation \eqref{Eq:h1} as
\begin{equation}\label{Eq:ExpH1}h_k^1(s,x,y)=-2se^{-s}\theta'(x)\sin(y).\end{equation}

\begin{proposition}\label{Pr:AsymptoticDevelopment}
The asymptotic expansion \eqref{Eq:AsymptoticDevelopment} is valid in $L^2(\O)$ in the following sense: there exists $M>0$   that depends on the time horizon $T$ \color{black} such that, if we define
$$R_k:=h_k(t,x)-h_k^0(k^2t,x,kx)-\frac1k h_k^1(k^2t,x,kx)$$ then, for any $t\in (0;T)$, 

\begin{equation}\label{Eq:ControleReste}
\Vert R_k(t,\cdot)\Vert_{L^2(\O)}\leq \frac{M}{k^2}. \end{equation}In particular,
\begin{equation}\label{Durin}\iint_{(0;T)\times \O}R_k^2\leq \frac{M^2}{k^4}\,, \int_0^T \Vert R_k\Vert_{L^2(\O)}\leq \frac{MT}{k^2}.
\end{equation}
\end{proposition}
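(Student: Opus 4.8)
The plan is the standard ``substitute the ansatz and bound the residual'' strategy for two-scale expansions. I set
$$v_k(t,x):=h_k^0(k^2t,x,kx)+\tfrac1k h_k^1(k^2t,x,kx)=\theta(x)\cos(kx)e^{-k^2t}-2kt\,e^{-k^2t}\theta'(x)\sin(kx),$$
using the explicit profiles \eqref{Eq:ExpH0}--\eqref{Eq:ExpH1}, so that $R_k=h_k-v_k$. First I would check that $v_k$ inherits the data and the boundary conditions of $h_k$: one has $v_k(0,\cdot)=\theta\cos(k\cdot)=h_{k,0}$, and since $\theta=\psi_K$ is compactly supported in $(a_{n_0};b_{n_0})$, which is a compact subset of the open interval $\Omega=(0;\pi)$, the function $v_k$ vanishes identically near $\partial\Omega$, so $\partial_\nu v_k=0$. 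Hence $R_k(0,\cdot)=0$, $R_k$ is Neumann, and $R_k$ solves the linear parabolic problem $\partial_t R_k-\partial_{xx}R_k-f'(u)R_k=-g_k$ with $g_k:=\partial_t v_k-\partial_{xx}v_k-f'(u)v_k$.

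Next I would compute $g_k$ by the fast-variable chain rule: for a profile $w(s,x,y)$ evaluated at $(k^2t,x,kx)$ one has $\partial_t=k^2\partial_s$ and $\partial_{xx}=\partial_{xx}+2k\partial_{xy}+k^2\partial_{yy}$. Applying this to $v_k$, the choice \eqref{Eq:h0} of $h_k^0$ cancels the $O(k^2)$ contribution and the choice \eqref{Eq:h1} of $h_k^1$ cancels the $O(k)$ contribution, leaving
$$g_k=-\Big(\partial_{xx}h_k^0+\tfrac1k\partial_{xx}h_k^1+2\partial_{xy}h_k^1\Big)\Big|_{(k^2t,x,kx)}-f'(u)\Big(h_k^0+\tfrac1k h_k^1\Big)\Big|_{(k^2t,x,kx)}.$$
Inserting the explicit profiles and using that $u$ is bounded on $[0,T]$ (so $\|f'(u)\|_{L^\infty((0,T)\times\Omega)}<\infty$), each summand of $g_k(t,\cdot)$ is bounded pointwise, for $k\ge 1$, by $C(1+k^2t)e^{-k^2t}$ with $C$ depending only on $\|\theta\|_{\mathscr C^3}$ and $\|f'(u)\|_{L^\infty}$; hence $\|g_k(t,\cdot)\|_{L^2(\Omega)}\le C'(1+k^2t)e^{-k^2t}$. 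The point to stress is that, although such a term has amplitude of order one (it is $\sim e^{-1}$ for $t\sim k^{-2}$), the substitution $\tau=k^2t$ gives the time-integrated gain
$$\int_0^T\|g_k(t,\cdot)\|_{L^2(\Omega)}\,dt\le C'\int_0^T(1+k^2t)e^{-k^2t}\,dt\le \frac{C'}{k^2}\Big(1+\int_0^\infty\tau e^{-\tau}\,d\tau\Big)\le \frac{2C'}{k^2}.$$

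Finally I would close with a standard energy estimate for $R_k$: testing the equation against $R_k$, dropping the favourable dissipative term and using $|f'(u)|\le\|f'(u)\|_{L^\infty}$ gives $\frac{d}{dt}\|R_k(t,\cdot)\|_{L^2}\le\|f'(u)\|_{L^\infty}\|R_k(t,\cdot)\|_{L^2}+\|g_k(t,\cdot)\|_{L^2}$, and since $R_k(0,\cdot)=0$, Grönwall's lemma yields for all $t\in(0,T)$
$$\|R_k(t,\cdot)\|_{L^2(\Omega)}\le e^{\|f'(u)\|_{L^\infty}T}\int_0^t\|g_k(s,\cdot)\|_{L^2(\Omega)}\,ds\le \frac{M}{k^2},\qquad M:=2C'e^{\|f'(u)\|_{L^\infty}T},$$
which depends on $T$ as asserted; integrating this bound in $t$ over $(0;T)$ (and enlarging $M$ to absorb the factor $T$) gives \eqref{Durin}. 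The main obstacle is less the computation than the choice of norm: one must control $g_k$ in $L^1_tL^2_x$ (hence the Duhamel/Grönwall argument above) rather than in $L^2_tL^2_x$, since the latter would only produce $\|R_k(t,\cdot)\|_{L^2}=O(1/k)$ — and that is not enough, because both $h_k^0$ and the corrector $\tfrac1k h_k^1$ are already of size $\sim1/k$ in $L^2$, so an $O(1/k)$ remainder would contaminate the order-$1/k^2$ expansion \eqref{Eq:Climb}. The fast concentration factor $e^{-k^2t}$ is exactly what upgrades the $L^1_t$ integral of $g_k$ to order $1/k^2$.
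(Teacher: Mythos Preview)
Your proof is correct and follows essentially the same route as the paper's: write the equation for $R_k$, identify the source term, bound it in $L^1_tL^2_x$ via the concentration factor $e^{-k^2t}$, and close with an energy/Gr\"onwall argument. The only organisational difference is that the paper splits the source into three pieces $W_0V_{0,k}$, $V_{1,k}$, $V_{2,k}$ and proves a separate lemma estimating $\int_0^T\|\cdot\|_{L^2}$ for each, whereas you observe the single pointwise bound $|g_k(t,x)|\le C(1+k^2t)e^{-k^2t}$ and integrate once; your explicit check that $v_k$ inherits the Neumann condition (via the compact support of $\theta$) is a point the paper passes over silently.
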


\begin{proof}[Proof of Proposition \ref{Pr:AsymptoticDevelopment}]
To prove this Proposition, we write down the equation satisfied by $R_k$. Straightforward computations show that $R_k$ solves
\begin{equation}\label{Eq:Rk}
\partial_t R_k-\Delta R_k-f'(u)R_k:=f'(u)\left(h_k^0+\frac1kh_k^1\right)+\frac{\partial^2 h_k^0}{\partial x^2}+2\frac{\partial^2 h_k^1}{\partial x\partial y}+\frac1k\frac{\partial^2h_k^1}{\partial x^2},
\end{equation}
and all the functions on the right hand side are evaluated at $(k^2t,x,kx)$ (we dropped this for notational convenience). We now introduce the following notations:
$$
\begin{cases}
W_0:=f'(u)\,, 
\\
\\V_{0,k}(t,x):=h_k^0(k^2t,x,kx)+\frac1kh_k^1(k^2t,x,kx)\,, 
\\
\\ V_{1,k}:=-\frac{\partial^2 h_k^0}{\partial x^2}(k^2t,x,kx)\,, 
\\
\\ V_{2,k}:=-2\frac{\partial^2 h_k^1}{\partial x \partial y}-\frac1k\frac{\partial^2 h_k^1}{\partial x^2}.
\end{cases}$$
First of all, since $0\leq u\leq 1$ and $f\in \mathscr C^1$, there exists $M_0>0$ such that

\begin{equation}\label{Eq:W0}
\Vert W_0\Vert_{L^\infty((0;T)\times \O)}\leq M_0.
\end{equation}

We gather the main estimates on source terms in the following Lemma:
\begin{lemma}\label{Le:Estimates}
There exists $\tilde M>0$ such that
\begin{equation}\label{Eq:V0k}
\int_0^T\Vert V_{0,k}(t,\cdot)\Vert_{L^2( \O)}\leq \frac{\tilde M}{k^2},
\end{equation}
\begin{equation}\label{Eq:V1k}
\int_0^T\Vert V_{1,k}(t,\cdot)\Vert_{L^2 \O)}dt\leq \frac{\tilde M}{k^2}.
\end{equation}

\begin{equation}\label{Eq:V2k}
\int_0^T\Vert V_{2,k}(t,\cdot)\Vert_{L^2 \O)}dt\leq \frac{\tilde M}{k^2}.
\end{equation}

\end{lemma}

\begin{proof}[Proof of Lemma \ref{Le:Estimates}]
We prove the three estimates separately. Let us recall the following consequence of the Laplace method: for any integer $m\in \mathbb{N}^*$, one has
\begin{equation}\tag{$\bold{I}_m$}\label{Eq:LaplaceLambda}
\int_0^T t^{m-1}e^{-k^2t}dt\underset{k\to \infty}\sim\frac{(m-1)!}{k^{2m}}.\end{equation}
\textbf{Proof of \eqref{Eq:V0k}}

By the triangle inequality we get, for any $t\in (0;T)$, 
$$\Vert V_{0,k}(t,\cdot)\Vert_{L^2}\leq \Vert h_k^0(k^2t,\cdot,k\cdot)\Vert_{L^2(\O)}+\frac1k\Vert h_k^1(k^2t,\cdot,k\cdot)\Vert_{L^2(\O)}$$ We first use the explicit expressions \eqref{Eq:ExpH0}-\eqref{Eq:ExpH1} for $h_k^0$ and $h_k^1$ to obtain, using $\Vert \theta\Vert_{L^\infty}\leq 1$,
\begin{equation}\label{Borne:h_k^0_L2}
  \begin{split}
    \Vert h_k^0(k^2t,\cdot,k\cdot)\Vert_{L^2(\O)}^2 &= \int_\O \theta(x)^2\cos(kx)^2e^{-2k^2 t} d x\color{black} \\
    & \leq e^{-2k^2t}|\O|,
  \end{split}
\end{equation}
and integrating this inequality between $0$ and $T$ gives
$$\int_0^T\Vert h_k^0(k^2t,x,k\cdot)\Vert_{L^2(\O)}dt\leq \frac{\tilde M}{k^2}.$$
In the same way, we have, for any $t\in (0;T)$,
\begin{equation}\label{Borne:h_k^1_L2}
  \begin{split}
    \Vert h_k^1(k^2t,\cdot,k\cdot)\Vert_{L^2(\O)}^2 &=4 k^{4}t^2e^{-2k^2t}\int_\O (\theta'(x))^2\sin(x)^2dx \\
    & \leq C k^4t^2e^{-2k^2t}|\O|\cdot\Vert \theta'\Vert_{L^\infty}^2,
  \end{split}
\end{equation} for some constant $C$.
Taking the square root and integrating in time we get, for a constant $C'$,

$$\frac1k\int_0^T\Vert h_k^1(k^2t,\cdot,k\cdot)\Vert_{L^2(\O)}dt\leq C' |\O|\cdot\Vert \theta'\Vert_{L^\infty} k\int_0^Tte^{-k^2t}dt.$$

Using \eqref{Eq:LaplaceLambda} with $m=2$ gives

$$
\int_0^T\frac1k\Vert h_k^1(k^2t,\cdot,k\cdot)\Vert_{L^2(\O)}dt\leq \frac{M_1}{k^3}$$ for some constant $M_1$.

Summing these two contributions gives \eqref{Eq:V0k}.

\textbf{Proof of \eqref{Eq:V1k}}
This follows from the same arguments, by simply observing that 
$$V_{1,k}(t,x)=-e^{-k^2t}\theta''(x)\cos(kx).$$

\textbf{Proof of \eqref{Eq:V2k}} We once again split the expression and estimate separately 
$$\int_0^T \left\Vert \frac{\partial^2 h_k^1(k^2t,\cdot,k\cdot)}{\partial x \partial y}\right\Vert_{L^2(\O)}dt \text{ and }\frac1k\int_0^T \left\Vert \frac{\partial^2 h_k^1(k^2t,\cdot,k\cdot)}{\partial x^2}\right\Vert_{L^2(\O)}dt.$$

We first observe that for any $t\in (0;T)$, we have
 $$\frac{\partial^2 h_k^1(k^2t,\cdot,k\cdot)}{\partial x \partial y}=-4k^2te^{-k^2t}\theta''(x)\cos(y).$$In particular, for any $t\in (0;T)$
 $$ \left\Vert \frac{\partial^2 h_k^1(k^2t,\cdot,k\cdot)}{\partial x \partial y}\right\Vert_{L^2(\O)}\leq 4k^2te^{-k^2t}|\O|\cdot\Vert \theta''\Vert_{L^\infty}$$ so that the Laplace method \eqref{Eq:LaplaceLambda} with $\lambda=2$ gives the bound
 $$\int_0^T \left\Vert \frac{\partial^2 h_k^1(k^2t,\cdot,k\cdot)}{\partial x \partial y}\right\Vert_{L^2(\O)}dt\leq\frac{M_2}{k^2} $$ for some constant $M_2$. The proof of the control of the second term follows along exactly the same lines.
 
\end{proof}
Let us now prove estimate \eqref{Eq:ControleReste}. The equation on $R_k$ rewrites
\begin{equation}\label{Eq:Reste}
\partial_t R_k-\Delta R_k-W_0R_k=W_0 V_{0,k}+V_{1,k}+V_{2,k}.\end{equation}
Multiplying the equation by $R_k$ and integrating by parts in space gives

$$\frac12\partial_t \int_\O R_k^2+\int_\O|\n R_k|^2-\int_\O W_0 R_k^2\leq \Vert R_k\Vert_{L^2(\O)}\left(M_0\Vert V_{0,k}(t,\cdot)\Vert_{L^2(\O)}+\Vert V_{2,k}(t,\cdot)\Vert_{L^2(\O)}+\Vert V_{2,k}(t,\cdot)\Vert_{L^2(\O)}\right).$$
In other words, bounding $W_0$ by $M_0$ and defining $g(t):=\Vert R_k(t,\cdot)\Vert_{L^2(\O)}^2$ we obtain
$$\frac12 g'(t)\leq M_0g(t)+\sqrt{g(t)}\left(M_0\Vert V_{0,k}(t,\cdot)\Vert_{L^2(\O)}+\Vert V_{2,k}(t,\cdot)\Vert_{L^2(\O)}+\Vert V_{2,k}(t,\cdot)\Vert_{L^2(\O)}\right).$$ Furthermore, $R_k(0,\cdot)=0$. We thus obtain, by the Gronwall Lemma, for any $t\in (0;T)$,
$$\sqrt{g(t)}e^{-M_0t}\leq \int_0^t e^{-M_0s}\left(M_0\Vert V_{0,k}(t,\cdot)\Vert_{L^2(\O)}+\Vert V_{2,k}(t,\cdot)\Vert_{L^2(\O)}+\Vert V_{2,k}(t,\cdot)\Vert_{L^2(\O)}\right).$$ Hence, by Lemma \ref{Le:Estimates} we get for some constant $N_0$ and any $t\in( 0;T)$,

$$\Vert R_k(t,\cdot)\Vert_{L^2(\O)}\leq N_0\int_0^T\left(\Vert V_{0,k}(t,\cdot)\Vert_{L^2(\O)}+\Vert V_{2,k}(t,\cdot)\Vert_{L^2(\O)}+\Vert V_{2,k}(t,\cdot)\Vert_{L^2(\O)}\right)\leq \frac{N_0\tilde M}{k^2}.$$

\end{proof}
 
\subsection{Back to the proof}
We turn back to the proof of Theorem \ref{Prop1} and, more precisely, to the proof of \eqref{Eq:Climb}.
\begin{proof}[Proof of Theorem \ref{Prop1}]
We use the same  $\theta$ as above and the same notation $\alpha_k$ as in the introduction of the proof (equation \eqref{Eq:alpha}).
Let us now consider the initial perturbation $\overline h_{k,0}:=\theta(x)(\cos(kx)+\alpha_k)$. We recall that $z$ is the solution of \eqref{Eq:h} with initial condition $\theta$ and that $h_k$ is the solution of \eqref{Eq:h} with initial condition $\theta(\cdot)\cos(k\cdot)$. Hence, since the equation is linear, 
$$\overline h_k=h_k+\alpha_k z.$$ By parabolic regularity, 
\begin{equation}\label{Eq:Z}
\sup\Big(\sup_{k\in \N} \Vert h_k\Vert_{L^2(\OT)}\,,\Vert z\Vert_{L^2(\OT)}\Big)<\infty.\end{equation} Since $\theta \in \mathscr C^4$, the Riemann-Lebesgue lemma implies
\begin{equation}\label{Eq:Es}
\alpha_k=\underset{k\to \infty}{\mathscr O}\left(\frac1{k^4}\right).\end{equation}
\color{black}

We define $$F(t,x):=f''(\overline{u}(t,x))\overline{p}(t,x)$$ so that the second order derivative of $\mathcal{J}_T$ in $\overline{u}_0$ rewrites as
\begin{equation}\label{Eq:D2J}
  \begin{split}
    \langle\n^2 \mathcal J_T(u_0),\overline h_{k,0}\rangle&= \iint_\OT F{\overline h_k}^2
    \\&=\iint_\OT F h_k^2
    \\&+2\alpha_k \iint_\OT F zh_k
    \\&+\alpha_k^2\iint_\OT Fz^2.
    \end{split}\end{equation}    
    
    We focus on the first term:
   \begin{equation}\begin{split}\iint_\OT F h_k^2
    & = \iint_\OT F(t,x){ \left( R_k(t,x) + V_{0,k}(t,x) \right) }^2 \ dx \ dt,\\
    & = \iint_\OT F(t,x)\left( {R_k(t,x)}^2 + 2R_k(x,t)V_{0,k}(t,x) + {V_{0,k}(t,x)}^2 \right) \ dx \ dt.
  \end{split}
\end{equation}
From the assumptions on $f$ and the estimates on $u$ and $p$, it easy to see that 
\begin{equation}\label{Borne:F}
  \Vert F \Vert_{L^\infty((0;T)\times \O)}\leq M_3.
\end{equation}
Gathering \eqref{Borne:F} and \eqref{Durin} it follows that 
\begin{equation}\label{Borne:D2J_term1}
  \iint_\OT F(t,x) {R_k(t,x)}^2 \ dx \ dt = \mathscr O(k^{-4})
\end{equation}
and similarly, gathering \eqref{Eq:V0k} and \eqref{Eq:ControleReste}  we obtain 
\begin{equation}\label{Borne:D2J_term2}
  \iint_\OT F(t,x) R_k(t,x) V_{0,k}(t,x) \ dx \ dt = \mathscr O(k^{-4})
\end{equation}
Let us now study the term 
\begin{align*}
  \iint_\OT F(t,x) {V_{0,k}(t,x)}^2 \ dx \ dt & = \iint_\OT F(t,x) \left( h_k^0(t,x) +\frac1 k h_k^1(t,x) \right)^2\\
  & = \iint_\OT F(t,x) \left( {h_k^0(t,x)}^2 + 2\frac1 k h_k^0(t,x)h_k^1(t,x) + \frac1 {k^2} {h_k^1(t,x)}^2 \right) dx dt
\end{align*}
Once again we split the expression. Applying  the Cauchy-Swchartz inequality, and  using  estimates  \eqref{Borne:h_k^0_L2}-\eqref{Borne:h_k^1_L2} it follows that the second term verifies  
\begin{equation}\label{Borne:D2J_term3_2}
  \begin{split}
    \Bigg | 2 \frac1 k  \iint_\OT F(t,x)h_k^0(t,x)h_k^1(t,x) dx dt \Bigg | & \leq 2 \frac{M_3}{k}  \int_0^T \Vert h_k^0 \Vert_{L^2(\O)}\Vert h_k^1 \Vert_{L^2(\O)} dt \\
  & \leq 2 M_4 k \int_0^T t e^{-2k^2t} dt,\\
  & = \mathscr O\left(\frac1 {k^3}\right).
  \end{split}
\end{equation}
The last step in the above expression follows directly from \eqref{Eq:LaplaceLambda} with $m=2$. 

We obtain in a similar way the following estimate on the third term:
\begin{equation}\label{Borne:D2J_term3_3}
  \begin{split}
    \Bigg | \frac1 {k^2} \iint_\OT F(t,x){h_k^1(t,x)}^2 dx dt \Bigg | & \leq \frac{M_3}{k^2}  \int_0^T \Vert h_k^1 \Vert_{L^2(\O)}^2 dt \\
  & \leq M_4 k^2 \int_0^T t^2 e^{-2k^2t} dt,\\
  & = \mathscr O\left(\frac1 {k^4}\right).
  \end{split}
\end{equation}
In this case we applied \eqref{Eq:LaplaceLambda} for $m=3$. 

Finally, let us study the first term which can be written as
\begin{equation}
  \begin{split}
    \iint_\OT F(t,x){h_k^0(t,x)}^2 dx dt & = \int_0^T e^{-2k^2t} G(t) \ dt
  \end{split}
\end{equation} 
where $G (t)\color{black}:=\int_{\O} F(t,x){\theta(x)}^2{\cos(kx)}^2 dx$ is a continous function of time as a consequence of parabolic regularity.

%
%
However, $\theta$ was chosen so that $$\int_\O f''(\overline u_0(\cdot))p(0,\cdot)\theta^2>0.$$ As $\cos(k\cdot)^2=\frac12\left(1+\cos(2\cdot)\right)\underset{k\to \infty}{\rightharpoonup} \frac1 2$, it follows that for any $k$ large enough $G(0)>0$. \color{black} Furthermore, from the Laplace method, when $k\to \infty$, one has 
\begin{equation}\label{Borne:D2J_term3_1}
  \begin{split}
    \iint_\OT F(t,x){h_k^0(t,x)}^2 dx dt \sim \frac{1}{2k^2} G(0).
  \end{split}
\end{equation}

Gathering the estimates in \eqref{Borne:D2J_term1},\eqref{Borne:D2J_term2}, \eqref{Borne:D2J_term3_2}, \eqref{Borne:D2J_term3_3}, \eqref{Borne:D2J_term3_1} and plugging them into the second derivative of the functional $\mathcal{J}_T$ given by \eqref{Eq:D2J} it follows that 
\begin{equation}\label{Eq:20}
  \iint_\OT F h_k^2\underset{k\to \infty}{\sim} \frac{1}{2k^2} G(0).
\end{equation}

We go back to \eqref{Eq:D2J}. By \eqref{Eq:Z}-\eqref{Eq:Es} and by \eqref{Eq:20} we have
\begin{equation}\begin{split}  \langle\n^2 \mathcal J_T(u_0),\overline h_{k,0}\rangle=\iint_\OT F h_k^2+\underset{k\to \infty}{\mathscr O}\left(\frac1{k^4}\right)
\underset{k\to \infty}{\sim} \frac{1}{2k^2} G(0)\end{split}
\end{equation}
%
This means that for $k$ suficiently large, 
\[k^2\langle\n^2 \mathcal J_T(u_0),\overline h_{k,0}\rangle> 0\]
which contradicts the fact that $\overline {u}_0$ is a maximiser of $\mathcal{J}_T$. The proof of the Theorem is complete.\end{proof}
 \color{black}
\section{Proof of Theorem \ref{Th:Convex}}
The proof follows essentially from the same arguments as in \cite{Bandle}. We thus only present the   main steps \color{black} that are in order so as to apply the methods of \cite{Bandle}.
We define $g(u):= f(u)+cu$, with $c>\|f'\|_{L^{\infty}}$, so that $g$ is increasing.

%

\paragraph{Reduction to a bang-bang maximiser.} We recall that bang-bang functions are defined as characteristic functions of subsets of $\O$, that is, functions only taking values 0 and 1.
First, as $f$ is convex, it follows from the same arguments as Proposition 6 of \cite{NadinToledo} that $\mathcal J_T$ is convex. Hence, one can restrict to maximisers among the extremal points of $\mathcal{A}$. These are exactly bang-bang function: $u_0$ satisfies $u_{0}=0$ or 1 almost everywhere on $(0,\pi)$.

\paragraph{Reduction to a periodic problem.}
Next, for all $t\in [0,T]$, we extend $u(t,\cdot)$ to $(-\pi;\pi)$ by symmetrisation with respect to $0$, and we then extend it to $\R$ by $2\pi$-periodicity. The Neumann boundary conditions at $x=0$ and $x=\pi$ ensure that the extended function is of class $\mathcal{C}^{1}$, and   it \color{black} thus satisfies the equation on the torus:
\begin{equation}\label{Eq:Periodic}
\begin{cases}
\frac{\partial u}{\partial t}-\Delta u+cu=g(u)\text{ in }(0;T)\times \mathbb T\,, 
\\ u(0,\cdot)=u_0.\end{cases}\end{equation}

Let us also recall some basic facts about rearrangements.
\paragraph{Periodic rearrangements }
We recall the definition of the periodic rearrangement: for any periodic function $u:\mathbb T\to \R_+$ if we identify $\mathbb T$ with $[-\pi;\pi]$ there exists a unique symmetric (with respect to 0)   non-increasing \color{black} function $u^\star:\mathbb T\to \R$ that has the same distribution function as $u$. $u^\star$ is called the periodic rearrangement of $u$.
 We recall that the distribution function of $u$ is
$$\mu_{ u\color{black}}(t):=\operatorname{Vol}\left(\{ u\color{black}\geq t\}\right)$$ and that $u^\star$, the periodic rearrangement of $u$, is the left inverse of $\mu_u$. 


\begin{proposition}\label{Pr:Convex}
Let $v$ the solution of (\ref{Eq:Periodic}) associated with the initial datum $u_{0}^{\star}$. Then 
$$ \forall t \in (0;T)\,, \forall r\in (0;\pi)\,, \int_{-r}^r v(t,x)dx\geq  \int_{-r}^r u^{\star}(t,x)dx.\color{black}$$
In particular, taking $t=T$ and $r=\pi$:
\begin{equation} \int_{-\pi}^\pi v(T,x)dx\geq  \int_{-\pi}^\pi u^\star(T,x)dx= \int_{-\pi}^\pi u(T,x)dx.\color{black}\end{equation}  

\end{proposition}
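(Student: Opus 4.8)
The plan is to prove the pointwise (in $r$) comparison via a parabolic comparison principle for the "distribution functions" (more precisely, for the functions $x\mapsto \int_{-x}^x$), following the classical Bandle/Alvino--Trombetti scheme adapted to the torus. I would first introduce, for the solution $u$ of \eqref{Eq:Periodic} with datum $u_0$ and the solution $v$ with datum $u_0^\star$, the "mass functions"
\begin{equation*}
U(t,r):=\int_{\{x:\,u(t,x)>u^\star(t,r)\}} u(t,x)\,dx,\qquad V(t,r):=\int_{-r}^r v(t,x)\,dx,
\end{equation*}
so that $U(t,r)=\int_0^{\mu_u(t,\cdot)^{-1}\text{-type set}}$; the point is that by the symmetric-decreasing rearrangement, $\int_{-r}^r u^\star(t,x)\,dx=\int_{\{u(t,\cdot)>u^\star(t,r)\}}u(t,x)\,dx = U(t,r)$, because $u$ and $u^\star(t,\cdot)$ are equimeasurable. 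The goal becomes $V(t,r)\geq U(t,r)$ for all $t,r$.

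Next I would derive a differential inequality for $U$. Differentiating $U$ in $r$ and using the coarea formula gives a relation between $\partial_r U$ and the level sets; differentiating in $t$ and using the equation $u_t-\Delta u+cu=g(u)$, integrating over the super-level set $\{u(t,\cdot)>u^\star(t,r)\}$, and applying the isoperimetric inequality on the circle (the relative/periodic isoperimetric inequality: among subsets of $\mathbb T$ of given measure, the perimeter is minimised by an interval, which is exactly where the one-dimensionality and the choice $\O=(0;\pi)$ make the argument clean) yields a \emph{differential inequality} of the form
\begin{equation*}
\partial_t U \;\leq\; \text{(second-order elliptic operator in }r\text{ applied to }U)\;-\;cU\;+\;\int_0^r g\!\left(\tfrac{\partial}{\partial r}(\text{something})\right)\,,
\end{equation*}
where crucially the monotonicity of $g$ (guaranteed by $c>\|f'\|_{L^\infty}$) and the convexity of $f$ — hence of $u\mapsto g(u)$ after the shift, giving $g(u)\geq g(0)+\dots$ with the right sign, together with $f(0)=0$ so $g(0)=0$ — make the reaction term on the $U$ side dominated by the corresponding term on the $V$ side. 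Meanwhile $V$ satisfies the \emph{equality} version of the same relation (since $v(t,\cdot)$ need not be symmetric decreasing, but one checks $V$ satisfies the analogous inequality in the opposite direction, or one uses that $v$ compared with its own rearrangement already sits on the favourable side). Setting $W:=U-V$, one gets that $W$ is a subsolution of a linear parabolic equation in $(t,r)$ with $W(0,r)\leq 0$ (equality of data: $u_0$ and $u_0^\star$ are equimeasurable, and at $t=0$, $\int_{-r}^r u_0^\star = \int_{\{u_0>u_0^\star(r)\}}u_0$) and with the correct boundary behaviour at $r=0$ and $r=\pi$ ($W(t,0)=0$, $\partial_r W(t,\pi)=0$ from total mass conservation $\int u=\int v=m$). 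The maximum principle then forces $W\leq 0$, i.e. $V\geq U$, which is the claim; taking $t=T,r=\pi$ and using mass conservation gives the displayed inequality with equality $\int u^\star(T)=\int u(T)$ trivially from equimeasurability.

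The main obstacle I anticipate is making the differential inequality for $U$ rigorous: this requires justifying the differentiation of $U$ in $t$ and $r$ (regularity of the distribution function, handling of flat parts of the level sets / degenerate critical levels), a careful application of the periodic isoperimetric inequality with the right constant and the correct handling of the two "ends" $r=0$ and $r=\pi$ (the rearrangement being symmetric decreasing means level sets are intervals centred at $0$, and one must check the perimeter term is $2$ generically but can drop to $0$ or $1$ at the extremes — this is exactly the Neumann-vs-Dirichlet subtlety the authors flag, and it is cleanly resolved only because we are in dimension one). The rest — the comparison principle for $W$, the role of monotonicity of $g$ and convexity of $f$, the reduction to bang-bang data and to the periodic problem — is already set up in the excerpt, so once the differential inequality is in hand the argument closes as in \cite{Bandle}.
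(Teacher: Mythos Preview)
Your approach is viable but genuinely different from the paper's. The paper does \emph{not} work directly with the parabolic equation and a differential inequality in $(t,r)$; instead it follows Bandle's scheme: first an \emph{elliptic} Talenti-type comparison on the torus (for $-\Delta w+\varepsilon w=\varphi$ versus $-\Delta z+\varepsilon z=\psi^\star$, concluding $\int_{-r}^r z\geq\int_{-r}^r w^\star$ via the periodic isoperimetric inequality and a maximum principle for the primitive $Z(\xi)=\int_0^\xi(z-w^\star)$), and then a \emph{time-discretisation / Rothe iteration}: with step $1/N$, the sequences $u_{0,k}$ and $v_{0,k}$ solve elliptic problems with right-hand sides $\frac1N u_{0,k}+g(u_{0,k})$ and $\frac1N v_{0,k}+g(v_{0,k})$, the elliptic comparison is applied inductively (using that $g$ is increasing so $g(u)^\star=g(u^\star)$, and convexity to propagate the integral inequality through $g$), and one passes to the limit $N\to\infty$. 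This buys exactly the regularity you flag as the main obstacle: all the level-set/coarea analysis happens on time-independent elliptic problems, so one never has to differentiate distribution functions in $t$ or worry about moving level sets.

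Your direct parabolic route (closer to Alvino--Lions--Trombetti or Mossino--Rakotoson) is also standard and should go through, but two points in your sketch deserve tightening. First, your worry that ``$v(t,\cdot)$ need not be symmetric decreasing'' is unfounded: since $u_0^\star$ is symmetric decreasing and the equation is invariant under $x\mapsto -x$, $v$ is even; moreover $w:=v_x$ solves $w_t-w_{xx}=(g'(v)-c)w$ with $w(0,\cdot)\leq 0$ on $(0,\pi)$ and $w=0$ at $0,\pi$, so $v_x\leq 0$ on $(0,\pi)$ for all $t$ by the maximum principle. Hence $V$ does satisfy the equality version, which cleans up that side. Second, the convexity of $f$ (hence of $g$) is used precisely to pass from the integral majorisation $\int_{-r}^r u^\star\leq\int_{-r}^r v$ to $\int_{-r}^r g(u^\star)\leq\int_{-r}^r g(v)$ via Hardy--Littlewood--P\'olya; in your parabolic inequality this is the step where the reaction term on the $U$ side is dominated by that on the $V$ side, and you should make this explicit rather than bundle it into ``the right sign''.
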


As   explained in the introduction, Proposition \ref{Pr:Convex} follows simply by adapting minor points in the proofs of \cite{Bandle}, and so we will simply indicate the main steps. The core idea is the following: the comparison results and Talenti-type inequalities one finds in the rearrangement literature rely on integrating the solution of the equation we are working with on its level sets and using the isoperimetric inequality. Thus, these proofs generally work only in the case of Dirichlet boundary conditions (for a recent, analogous result in the case of Robin boundary conditions we refer to \cite{AlvinooNitschTrombetti}), as these conditions guarantee that, if the solution is non-negative, none of its level sets intersects the boundary of the domain. However, in our case, since the Neumann boundary conditions allow to symmetrise the solution $u$ and to obtain a solution on the torus $\mathbb T$, the boundary of the domain is empty and the isoperimetric inequality holds, so that the proofs are identical.

\begin{proof}[Proof of Proposition \ref{Pr:Convex}] For the sake of readability, we break down the main steps in establishing the inequality
$$ \forall t \in (0;T)\,, \forall r\in (0;\pi)\,, \int_{-r}^r v(t,x)dx\geq  \int_{-r}^r u^{\star}(t,x)dx.$$

\begin{itemize}\item \underline{Comparison result for elliptic equations:} the first step is to compare the solutions of two elliptic problems. Let $\e>0$, let $\p\in L^2(\mathbb T)\,, \p\geq 0$, let $\psi\in L^2(\mathbb T)\,, \psi\geq 0$ satisfying 
$$\forall r \in (0;\pi)\,, \int_{-r}^r \psi^\star\geq \int_{-r}^r \p^\star,$$ and let $w_\p\,, z_\psi$ be the solutions to 
\begin{equation}\label{eq:w}\begin{cases}-\Delta w_\p+\e w_\p=\p&\text{ in }\mathbb T\,, 
\\ w_\p \in W^{1,2}(\mathbb T),\end{cases}\end{equation} and 
\begin{equation}\label{eqz}\begin{cases}-\Delta z_\psi+\e z_\psi=\psi^\star&\text{ in }\mathbb T\,, 
\\ z_\psi \in W^{1,2}(\mathbb T),\end{cases}\end{equation}
Then there holds:
\begin{equation}\label{eq:Tale}
\forall r\in (0;\pi)\,, \int_{-r}^r z_\psi\geq \int_{-r}^r w_\p.\end{equation}
To obtain \eqref{eq:Tale}, we may follow the  standard steps of \cite{Talenti}: we assume that the level sets of \eqref{eq:w} have measure zero (to cover the case of level sets of positive measure, one can argue as in \cite{Talenti}, to which we refer for the sake of brevity). Let $\tau >0$ be a real number. Integrating \eqref{eq:w} on $\{w_\p\geq \tau\}$ yields
$$-\int_{\{w_\p=\tau\}}\frac{\partial w_\p}{\partial \nu}=\int_{\{w_\p\geq \tau\}}\p -\e\int_{\{w_\p\geq \tau\}}w_\p\leq \int_0^{\mu_{w_\p}(\tau)}\p^\star -\e \int_0^{\mu_{w_\p}(\tau)}w_\p^\star,$$ where the last inequality comes from the Hardy-Littlewood inequality.  We recall that from the co-area formula
\begin{equation}\text{ for a.e. $\tau$, }\mu_{w_\p}'(\tau)=-\int_{\{w_\p=\tau\}}\frac1{|\n w_\p|}.\end{equation} From the Cauchy-Schwarz inequality and the isoperimetric inequality we obtain
\begin{align*}
4&\leq \operatorname{Per}\left(\{w_\p=\tau\}\right)
\\&\leq \int_{\{w_\p=\tau\}}\frac1{|\n w_\p|}\int_{\{w_\p=\tau\}}|\n w_\p|
\\&\leq -\mu_{w_\p}'(\tau)\int_{\{w_\p=\tau\}}|\n w_\p|\leq -\mu_{w_\p}'(\tau)\left(\int_0^{\mu_{w_\p}(\tau)}\p^\star -\e \int_0^{\mu_{w_\p}(\tau)}w_\p^\star\right).
\end{align*} Since $w_\p^\star$ is the left inverse of $\mu_{w_\p}$,standard arguments \cite{Alvino1990} imply
\begin{equation}\forall \xi\in (0;\pi)\,,-4(w_\p^\star)'(\xi)\leq \int_0^\xi \p^\star-\e\ \int_0^\xi w_\p^\star.\end{equation}
It should be noted that if we work with \eqref{eqz} instead of \eqref{eq:w} every inequality becomes an equality since $z_\psi=z_\psi^\star$, and thus $z_\psi$ satisfies
\begin{equation}\label{eeq:tz}
-4(z_\psi^\star)'(\xi)= \int_0^\xi \psi^\star-\e\ \int_0^\xi z_\psi^\star.
\end{equation}
Defining $Z_\p:=\int_0^\xi (z_\p-w_\p^\star)$ we hence have 
$$-Z_\p''+\frac\e4Z_\p\geq 0\,, Z_\p(0)=0.$$ Furthermore, integrating \eqref{eq:w} and \eqref{eqz} on the torus we obtain, by the equimeasurability of a function and its rearrangement, 
\begin{equation}
\int_0^{\pi} w_\p^\star=\frac12\int_{\mathbb T}w_\p=\frac1{2\e}\int_{\mathbb T}\p\leq \frac1{ 2\e}\int_0^\pi \psi=\int_0^\pi z_\psi^\star\end{equation} so that 
$$Z_\p(\pi)\geq 0.$$ By the maximum principle, $Z_\p\geq 0$, which concludes the proof.

\item \underline{Comparison result for parabolic equations:} for this second step, we follow the strategy of \cite{Bandle}, which relies on a Picard iteration scheme. Namely, we discretise the parabolic problem in time: let $N\in \N^*$ be a discretisation step. For $u_0\in \mathcal A$, we define the sequences $\{u_{0,k}\,, v_k\}_{k=0\,, \dots\,, N}$ as the solutions to 
\begin{equation}\label{eq:uk}u_{0,0}=u_0\,, \forall k\in \{0\,, \dots, N_1\}\,,\begin{cases}-\Delta u_{0,k+1}+\frac1Nu_{0,k+1}=\frac1Nu_{0,k}+g(u_{0,k})&\text{ in }\mathbb T\,, 
\\ u_{0,k}\in W^{1,2}(\mathbb T),\end{cases}\end{equation} and 
\begin{equation}\label{eq:vk}v_{0,0}=u_0^\star\,, \forall k\in \{0\,, \dots, N_1\}\,,\begin{cases}-\Delta v_{0,k+1}+\frac1Nv_{0,k+1}=\frac1Nv_{0,k}+g(v_{0,k})&\text{ in }\mathbb T\,, 
\\ v_{0,k}\in W^{1,2}(\mathbb T)\end{cases}\end{equation} respectively. As $g$ is convex and increasing, we have $g(v)^\star=g(v^\star)$. Thus, we can prove inductively that for every $N$ and for every $k\in \{0\,, \dots\,, N\}$ there holds 
$$\forall r\in (0;\pi)\,, \int_{-r}^r v_{0,k}\geq \int_{-r}^r u_{0,k}^\star$$ and it remains to pass to the limit $N\to \infty$ to recover the result.
 \color{black}
\end{itemize}
\end{proof}

\paragraph{Conclusion.}
Assume that $u_{0}$ is a bang-bang maximiser of problem $\eqref{Eq:Pv}$. Symmetrise it and extend it by periodicity. Consider the symmetric decreasing rearrangement $u_{0}^{\star}$ of its extension. Then 
$\int_{-\pi}^\pi v(T,x)dx\geq \int_{-\pi}^\pi u(T,x)dx$ by Proposition \ref{Pr:Convex}, where $v$ is the solution of the periodic equation (\ref{Eq:Periodic}) associated with the initial datum $u_{0}^{\star}$. Clearly, $v(t,\cdot)$ and $u(t,\cdot)$ are symmetric with respect to $x=0$ for all time $t>0$. Hence,  $\int_{0}^\pi v(T,x)dx\geq \int_{0}^\pi u(T,x)dx$. Also, one easily remarks that $v$ restricted to $(0,\pi)$ is the solution of the parabolic equation with Neumann boundary conditions (\ref{eq:rd_Gral}), associated with the initial datum $u_{0}^{\star}$ restricted to $(0,\pi)$. On the other hand, as $u_{0}$ is bang-bang, one has $u_{0}^{\star}=\mathds{1}_{(-m,m)}$. 
Hence, $\mathds{1}_{(0,m)}$ increases the criterion in $\eqref{Eq:Pv}$. Thus, it is a solution of $\eqref{Eq:Pv}$. \color{black}

\section{Numerical analysis in the bistable framework} \label{sec:numerics}
As we explained in the introduction, the behaviour of optimisers vary wildly depending on the shape of the reaction term $f$. To exemplify this phenomenon, we use the bistable non-linearity that motivated \cite{NadinToledo}, namely, $f(u):=u(1-u)(u-\theta)$, with $\theta \in (0,1)$. 

When considering the optimisation problem \eqref{Eq:Pv}, the fact that the set $\{p=c\}$ may have a positive measure or, in other words, that a solution may not be the characteristic function of a set, leads to several difficulties in terms of numerical methods, because standard gradient methods or fixed-point algorithms fail to capture what this so-called "singular arc" should be replaced with. 

Let us first recall the main principles of the numerical algorithm introduced in \cite{NadinToledo} and explain the difficulty related to  $\{p=c\}$ further. 
Given the initial condition at the $n$-step $u_0^n$, we construct $u_0^{n+1}=u_0^n+h_0^n$, where $h_0^n$ maximises \eqref{Derivative:J_T} and is an admissible perturbation. Since the adjoint at the $n$-th step $p_0^n$ may have level sets of positive measure, one can not directly apply the bathtub principle and choose $h_0^n$ as the difference of characteristic functions of two level sets of $p_0^n$; we must thus describe what happens on the singular arc, that is, on the level set $\{p_0^n=c^n\}$ where $c^n$ is chosen so that 
\begin{equation}|\{p_0^n>c^n\}|<m\,, |\{p_0^n\geq c^n\}|>m\,, |\{p_0^n=c^n\}|>0.\end{equation} We first define, in this case, $u_0^{n+1}=1$ on $\{p_0^n>c^n\}$, $u_0^{n+1}=0$ on $\{p_0^n<c^n\}$, and it remains to fix the value of $u_0^{n+1}$ on $\omega_n$. Defining $\omega_n:=\{p_0^n=c^n\}$ and discretising equation \eqref{eq:rd_p} on $\omega_n$ we obtain, with an explicit finite difference scheme
\begin{equation}\label{Eq:Hazard}
-\left(\frac{p_0^n(dt,x)-c^n}{dt}\right)=f'(u_0^{n+1})c^n\end{equation} and the value on $u_0^{n+1}$ on $\omega_n$ must be a root of \eqref{Eq:Hazard}. However, for bistable non-linearities, this equation may have two roots, say $\mu_1^n$ and $\mu_2^n$. In this case, these two roots can be distinguished through the convexity of $f$. In other words, if we have two roots, up to relabelling, 
\begin{equation}f''(\mu_1^n)>0\,, f''(\mu_2^n)<0.\end{equation} 
In \cite{NadinToledo} this difficulty is overcome by examining the two different possibilities and choosing the best one, which significantly lessens the performance of the algorithm, but Theorem \ref{Prop1} allows to overcome this difficulty by choosing directly the root $\mu_2^n$, which is in the "concavity" zone of $f$.

\subsection{Comparison of different numerical methods in the one-dimensional case}

In this section, we want to study an example in order to compare the performances of our numerical algorithm with other well known optimisation algorithms to solve general nonlinear problems under constraints. More precisely, we will consider the following numerical methods: 
\begin{itemize}
	\item \emph{Method 1:} The numerical algorithm introduced in \cite{NadinToledo}, which we improve using Theorem \ref{Prop1}, and that will be referred to as \emph{our algorithm}.
	\item \emph{Method 2:} The \textit{interior-point} method, which is used to solve optimisation problems with linear equality and inequality constraints by applying the Newton method to a sequence of equality constrained problems. For a more detailed description of this method see for instance \cite{Boyd_04}.
	\item \emph{Method 3:} The \textit{sequential quadratic programming} (SQP), which solves a sequence of optimisation sub-problems, each of which optimizes a quadratic model of the objective function subject to a linearisation of the constraints, see for instance \cite{Nocedal_06}. 
	\item \emph{Method 4:} The \textit{simulated annealing} method, which is a probabilistic technique used to approximate global optimisation in a large search space. See for instance \cite{Locatelli_00} for more details on this technique.
\end{itemize}

We used the MATLAB platform to perform the simulations. Methods 2 and 3 are already coded in the MATLAB function \verb+"fmincon"+ while methods 1 and 4 were coded for the experiment.  

\paragraph{Setting the data}

Let us consider $\Omega=(-50;50)$, and $m=13$; thus the admissible set is defined as follows:
\[
\mathcal{A}_{13}=\left \{u_0\in L^1(\Omega): 0 \leq u_0(x)\leq 1, \hbox{ and } \int_{\Omega}u_0(x) dx = 13 \right \}.
\]
Note that this set is defined by two inequalities and an equality constraint. We aim at maximising the quantity $\mathcal{J}_{T}(u_0):=\int_{\Omega}u(T,x) dx$ for $T=25$ and we use a bistable reaction term $f(u):= u(1-u)(u-0.25)$. 

In order to compare the performance of the four algorithms under the same conditions, we consider the same discretisation of $\Omega$. Moreover, the solution of the equation is systematically computed by the Crank-Nicolson method, and, for the initialisation we consider the same $u_0^0$ given by a single block of mass $13$. 
The value of the objective function at each iteration is numerically approximated by the rectangle rule. In particular, for the initialisation we have $\mathcal{J}_{25}(u_0^0)=29.42$. 


The results of the simulations are shown in Fig. \ref{Comparison} and Table \ref{Tab:Numerics}. For this example, our algorithm turns out to be faster than other well-known algorithms. Moreover, the evaluation of the objective function differs in less than $1 \%$ with respect to the best result obtained with the sequential quadratic programming method which takes more than twice the run-time of our algorithm.

\begin{table}[h]
	\caption{Comparing algorithms}
	\label{Tab:Numerics}  
	\centering    
	\begin{tabular}{lcc}
		\hline\noalign{\smallskip}
		Algorithm & {\begin{tabular}{c} Objective function \\ $\mathcal{J}_{25}(\overline{u}_0)$ \end{tabular} } & {\begin{tabular}{c} Run-time\\ (in seconds) \end{tabular} } \\
		\noalign{\smallskip}\hline\noalign{\smallskip}
		Our algorithm  & $77.9864$ & $452$ \\
		Interior point & $65.6175$ & $676$ \\
		Sequential quadratic programming  & $78.7672$ & $1342$ \\
		Simulated annealing & $77.6238$ & $4148$ \\
		\noalign{\smallskip}\hline
	\end{tabular}
\end{table}

\begin{figure}
	\centering
	\subfloat[Our algorithm]{\includegraphics[width = 3.5in]{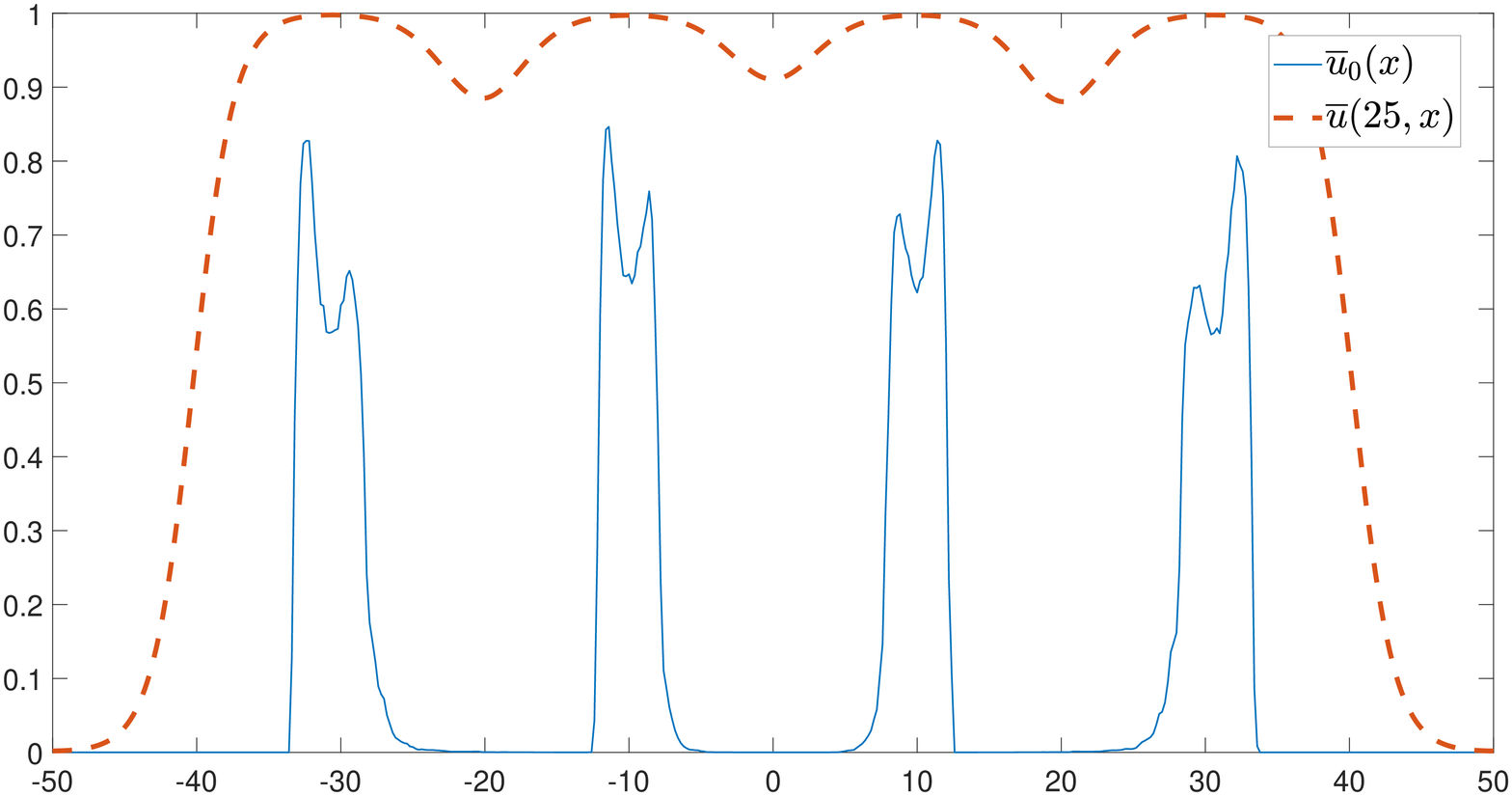}} \\
	\subfloat[Interior-point method]{\includegraphics[width = 3.5in]{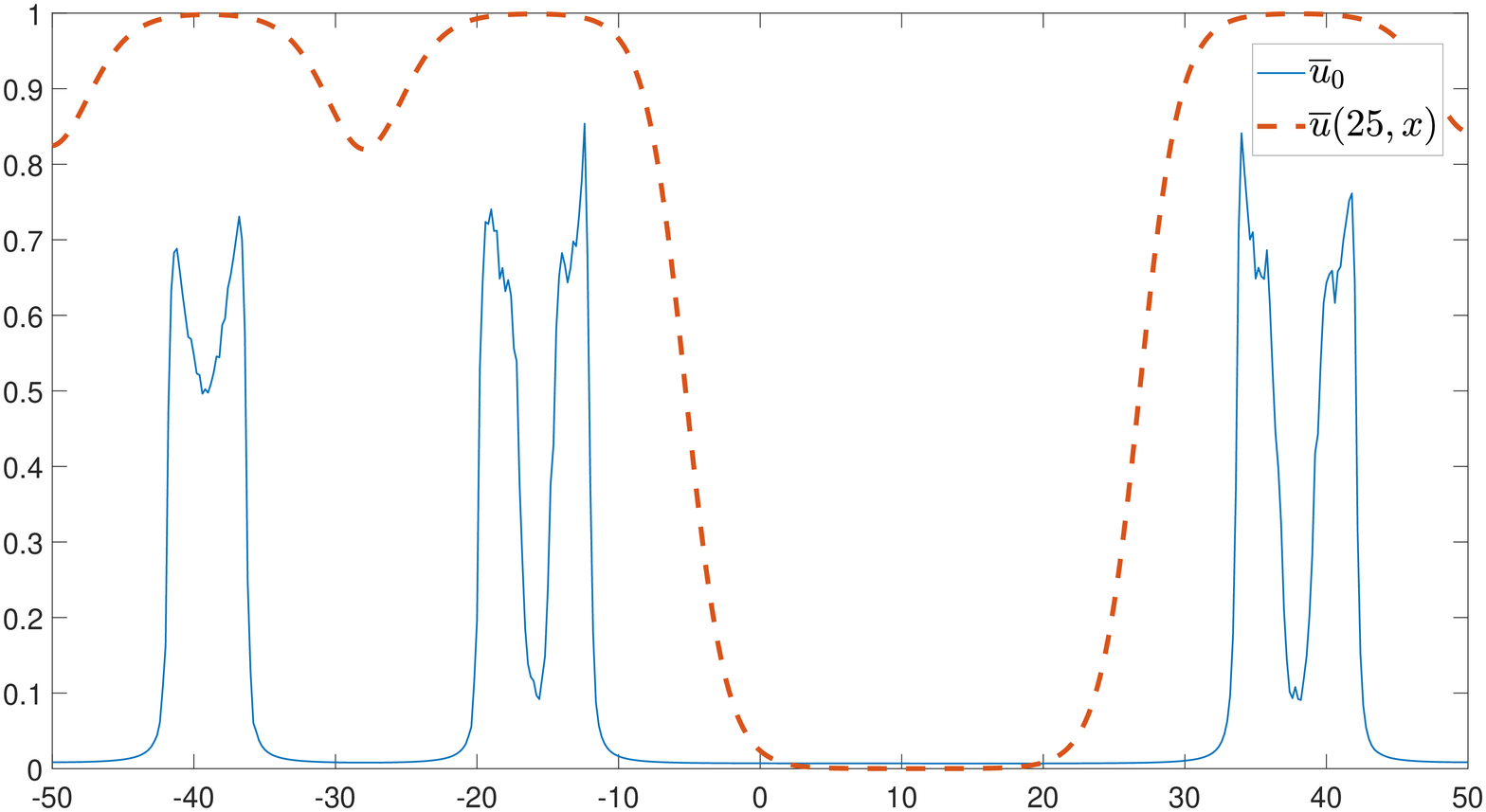}}\\
	\subfloat[Sequential quadratic programming]{\includegraphics[width = 3.5in]{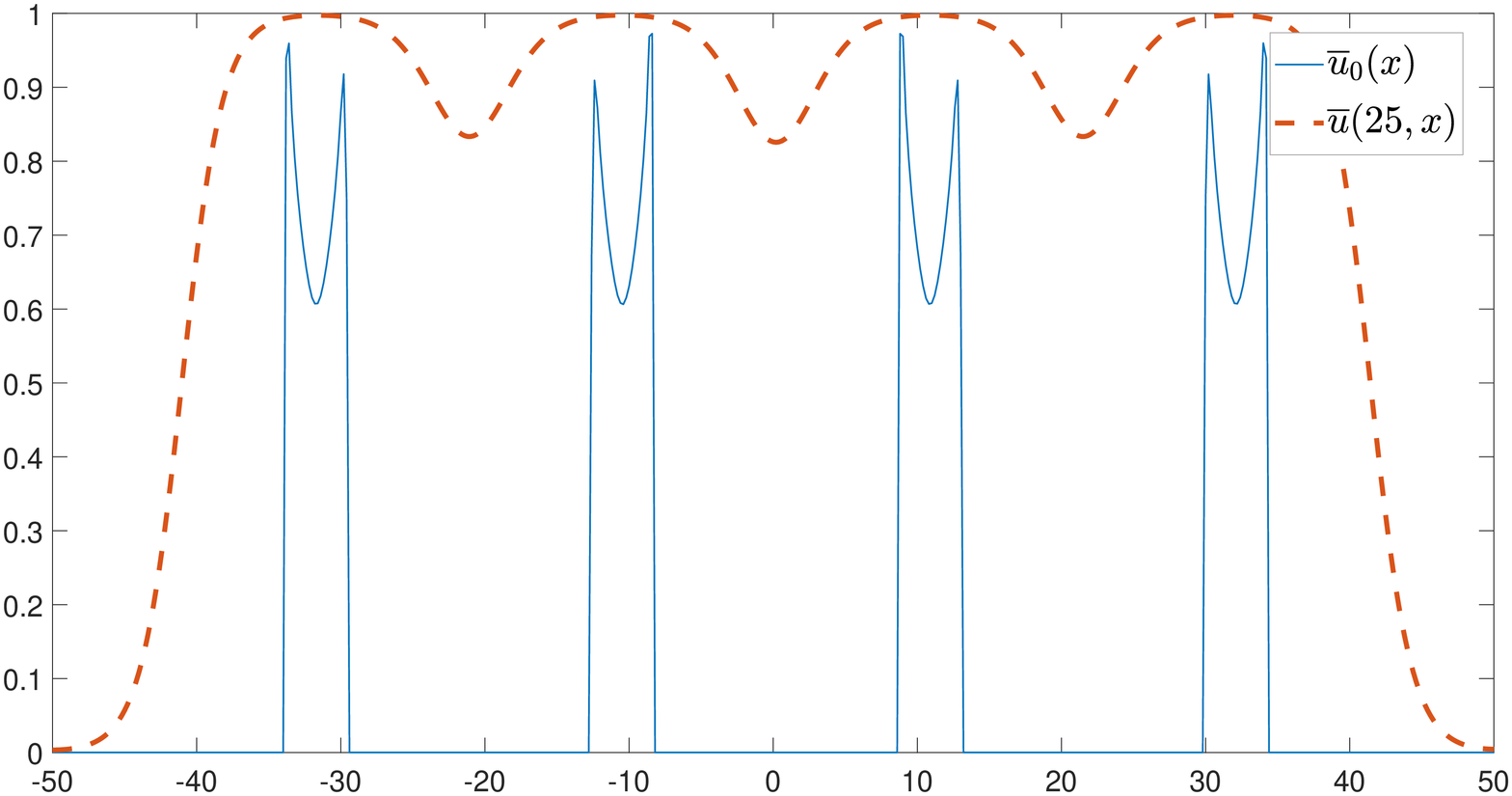}}\\
	\subfloat[Simulated annealing]{\includegraphics[width = 3.5in]{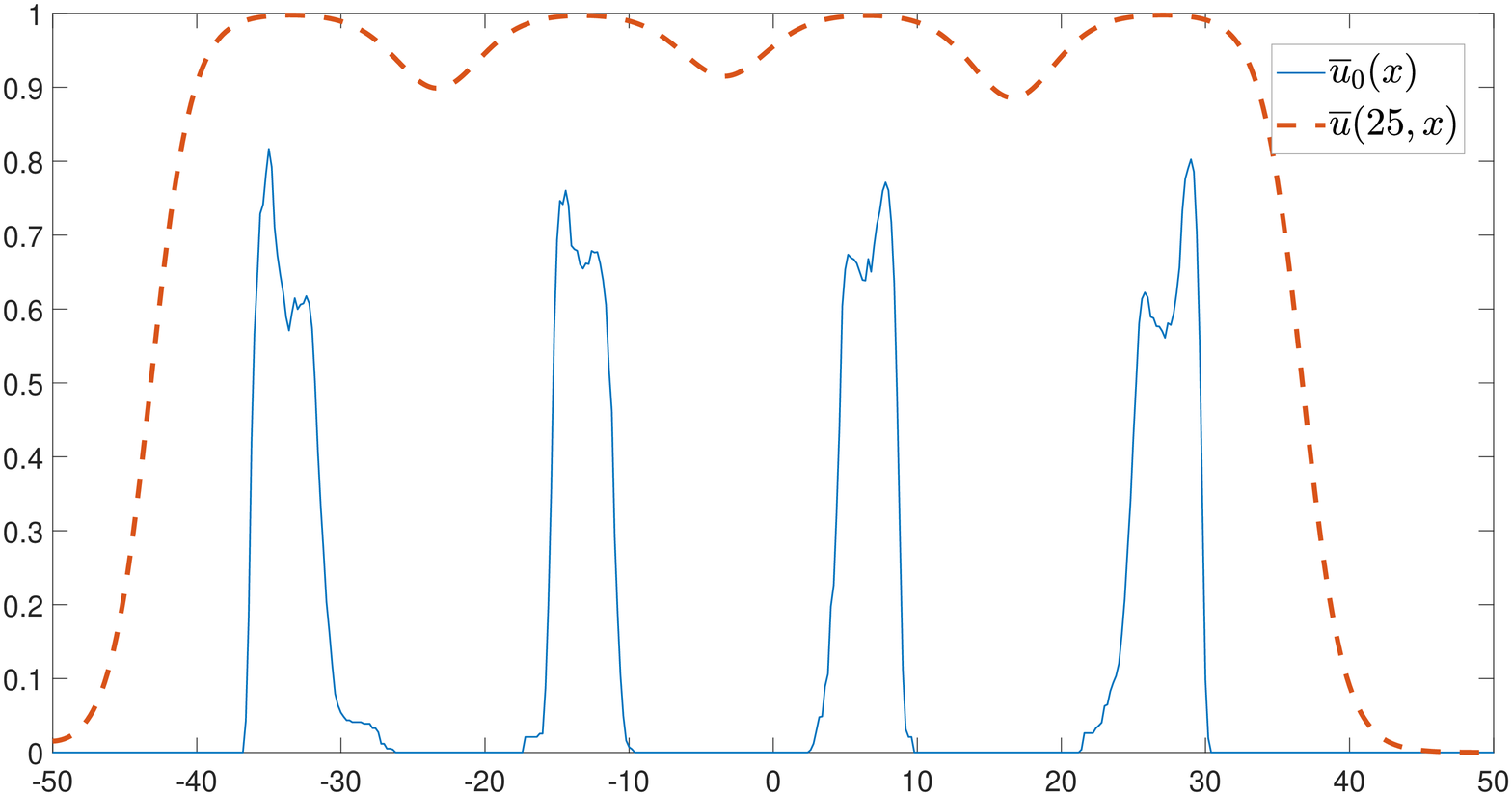}}
	\caption[Comparison of the numerical algorithms]{Optimum found by means of the four different numerical algorithms.}
	\label{Comparison}
\end{figure}

Though the solution given by the sequential quadratic programming method is clearly more regular than the others, the profile of the local optimisers found by simulated annealing and by our algorithm do not seem to be far from this profile.   Indeed, the solutions obtained through Methods 1, 3 and 4 are qualitatively similar. On the other hand, the interior-point method gives a significantly different optimum, which seems to point out the good performance of our algorithm. \color{black} It is important, however, to highlight that since uniqueness is not guaranteed in general, one can not ensure that the algorithms have converged to a global maximiser but only to a local one.

\subsection{Numerical simulations in the two-dimensional case}

We only considered in the present paper the one-dimensional case. We now display some numerical results obtained in dimension $2$, for which new patterns might arise.

To solve the reaction-diffusion equation in the two-dimensional case, we consider the alternating direction implicit method (ADI) which is a classical method to solve parabolic problems in two or three dimensions. As in the one-dimensional case, the algorithm and routines were coded in MATLAB.

We consider a square domain $\O=(-10;10)\times(-10;10)$, discretised uniformly by squares of side    $dx=0.22$ \color{black}. We fix $T=30$ for all subsequent simulations.   We first tackle the case of a bistable reaction term $$f(u)=u(1-u)(u-0.25).$$ In a second paragraph, we study the monostable case $$f(u)=(u+0.25)u(1-u).$$ The justification for this second case is that this is a non-concave monostable non-linearity. It is hence not covered by the theoretical results of \cite{NadinToledo}.
\subsubsection{The bistable case} 
\color{black}
\paragraph{Example 1}

The algorithm is initialised with a ball of full density located in the middle of the domain $\O$. The mass is fixed to $m=5.8\pi$, see Fig. \ref{Ejemplo 1}(a). After 20 iterations, the algorithm converges to the local optimum showed in Fig. \ref{Ejemplo 1}(b). The evolution of the objective function $\mathcal{J}_{30}$ through iterations is showed in Fig. \ref{Fig:operador_2}

\begin{figure}[!h]
	\centering
	\subfloat[$u_0^0$]{\includegraphics[width = 0.5\textwidth]{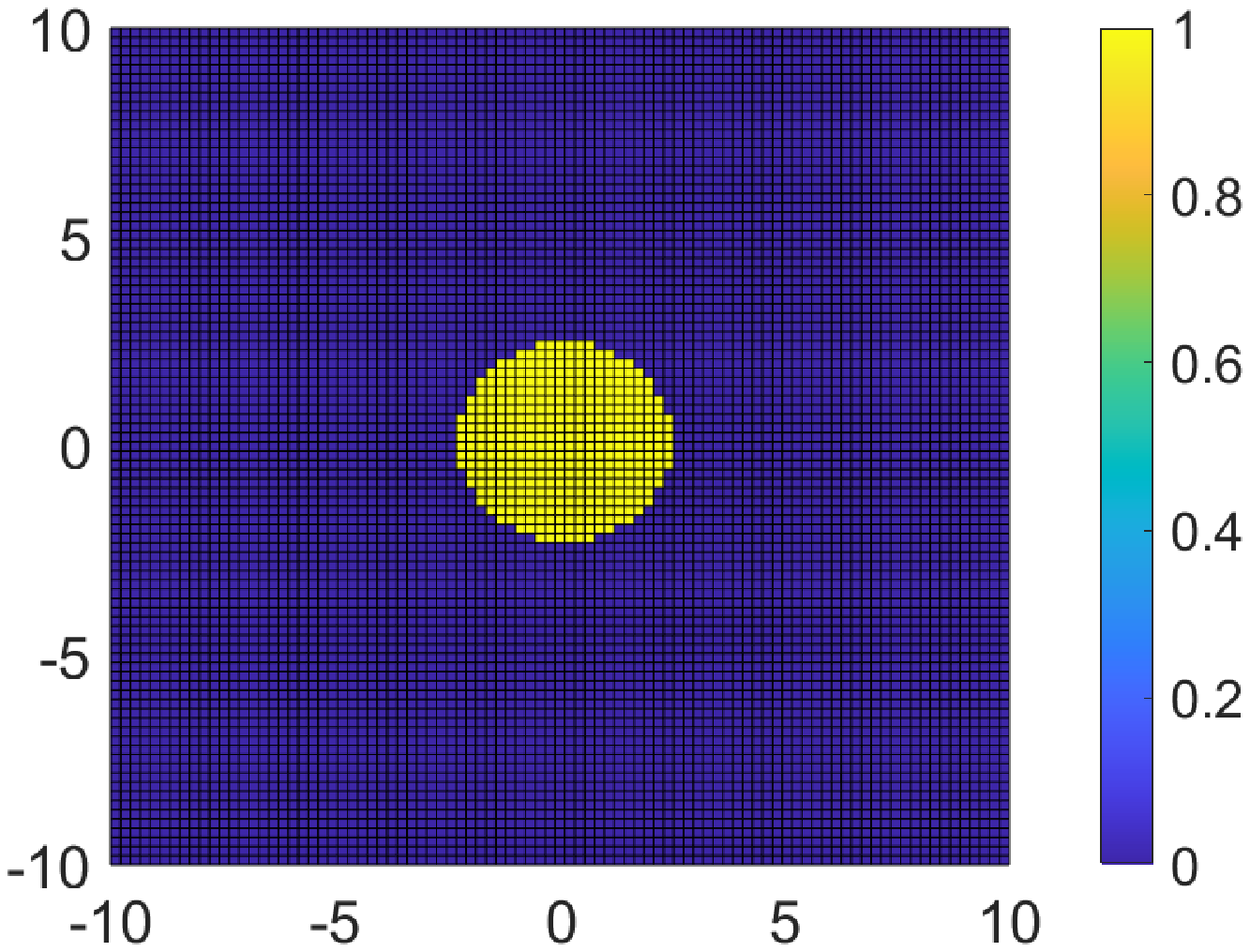}}
	\subfloat[$\overline{u}_0=u_0^{20}$]{\includegraphics[width = 0.5\textwidth]{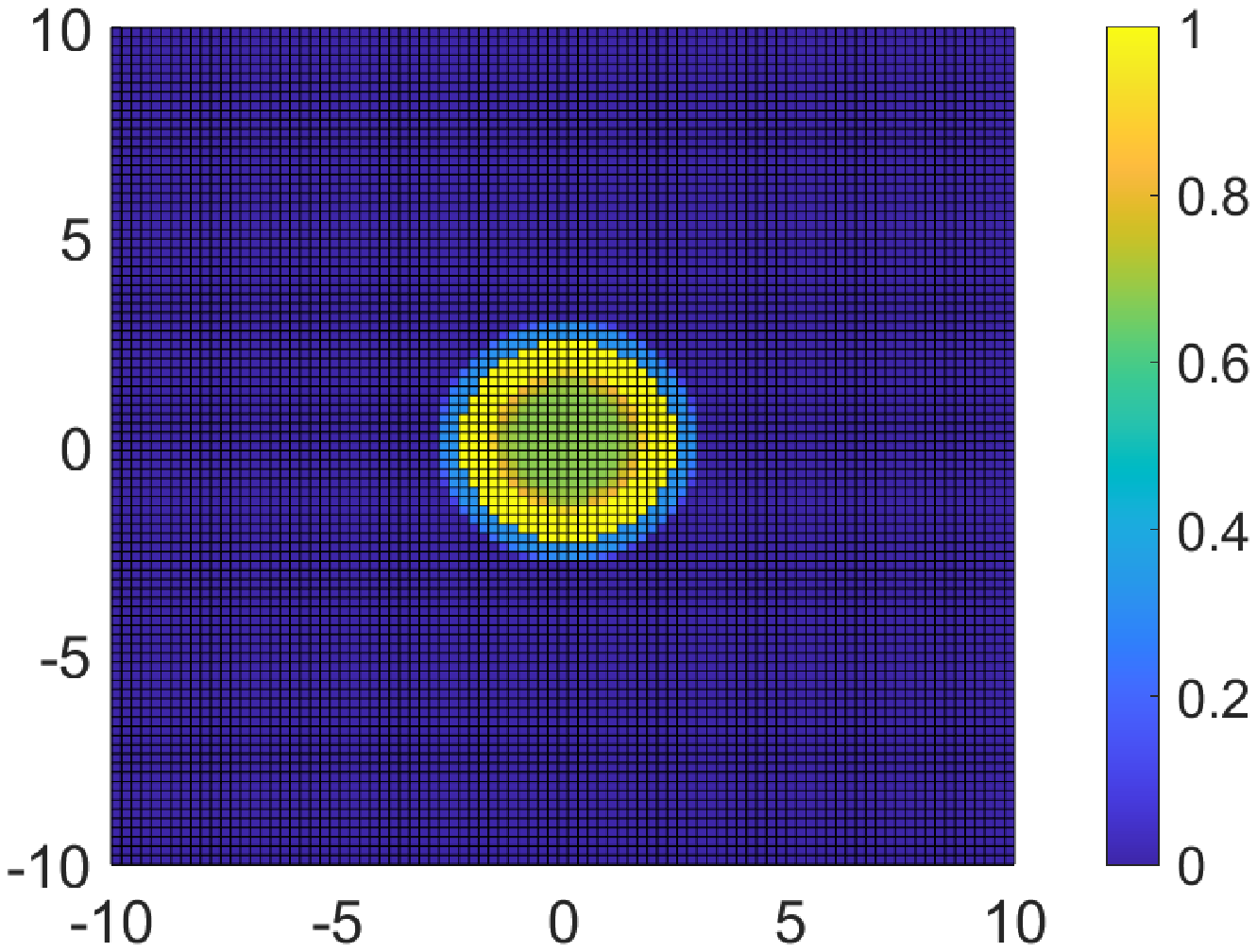}}
	\caption[2D-simulations example 1, local optimum]{In the left-hand side, we show the input of the algorithm, given by the ball of radius $r=\sqrt{5.8}$ centered at the origin. In the right-hand side,  we display the local optimum found by the numerical algorithm after 20 iterations, which looks radial, but is no longer a bang-bang distribution: it does not only take values 1 and 0.}
	\label{Ejemplo 1}
\end{figure}

\begin{figure}[!h]
	\centering
	\includegraphics[width=0.7\textwidth]{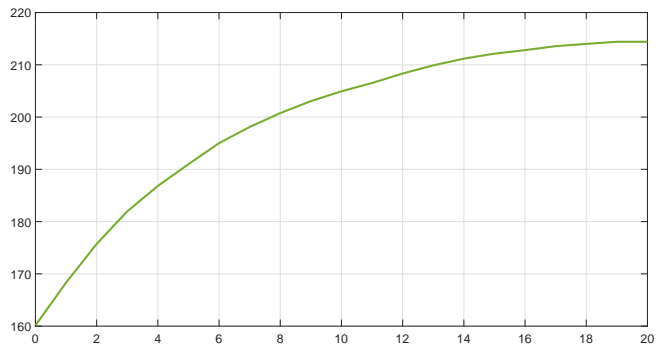}
	\caption[Evolution of the objective function example 1]{Evolution of the objective function from the initialisation $\mathcal{J}_{30}(u_0^0)=160.1$ to the last iteration $\mathcal{J}_{30}(u_0^{20})=214.4$.}
	\label{Fig:operador_2}
\end{figure}

One might see that the local optimum found by the numerical algorithm is no longer a bang-bang function but a circular ball with less mass in the middle and a slightly bigger ratio. Looking at the adjoint state defined as the solution of equation \eqref{eq:rd_p}, associated to this initial data, one might see that the area in the middle of the circle corresponds to a set where the adjoint state remains constant, see Fig. \ref{Ejemplo 2 adjunto}.

\begin{figure}[!h]
	\centering
	\includegraphics[width = 0.7\textwidth]{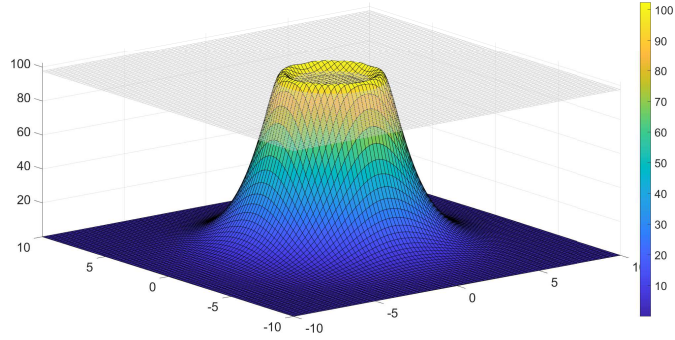}
	\caption[2D simulations example 1, adjoint state]{The figure shows the surface given by the solution $\overline{p}(0,x)$ of the adjoint problem defined by the eq. \ref{eq:rd_p} associated to the initial data $\overline{u}_{0}$ found by our algorithm. The plane colored in gray, is associated to the value $\overline{c}$ described in Theorem \ref{Thm1} and thus for every $x\in \O$ such that $p_0(\mathbf{x})=\overline{c}$, one has $0<\overline{u}_0(x)<1$, see Fig. \ref{Ejemplo 1} (b).}
	\label{Ejemplo 2 adjunto}
\end{figure}

\paragraph{Example 2}

In this case, we keep the same discretization and initial mass $m$ of the previous example, but we consider an initial data which is a stripe of full density dividing our domain into two equal regions of zero density, see Fig. \ref{Ejemplo 2} (a). The algorithm converges after 38 iterations and the local optimum is displayed in Fig. \ref{Ejemplo 2} (b). The corresponding variations of the objective function is showed in Fig. \ref{Fig:operador_3}

\begin{figure}[!h]
	\centering
	\subfloat[$u_0^0$]{\includegraphics[width = 0.5\textwidth]{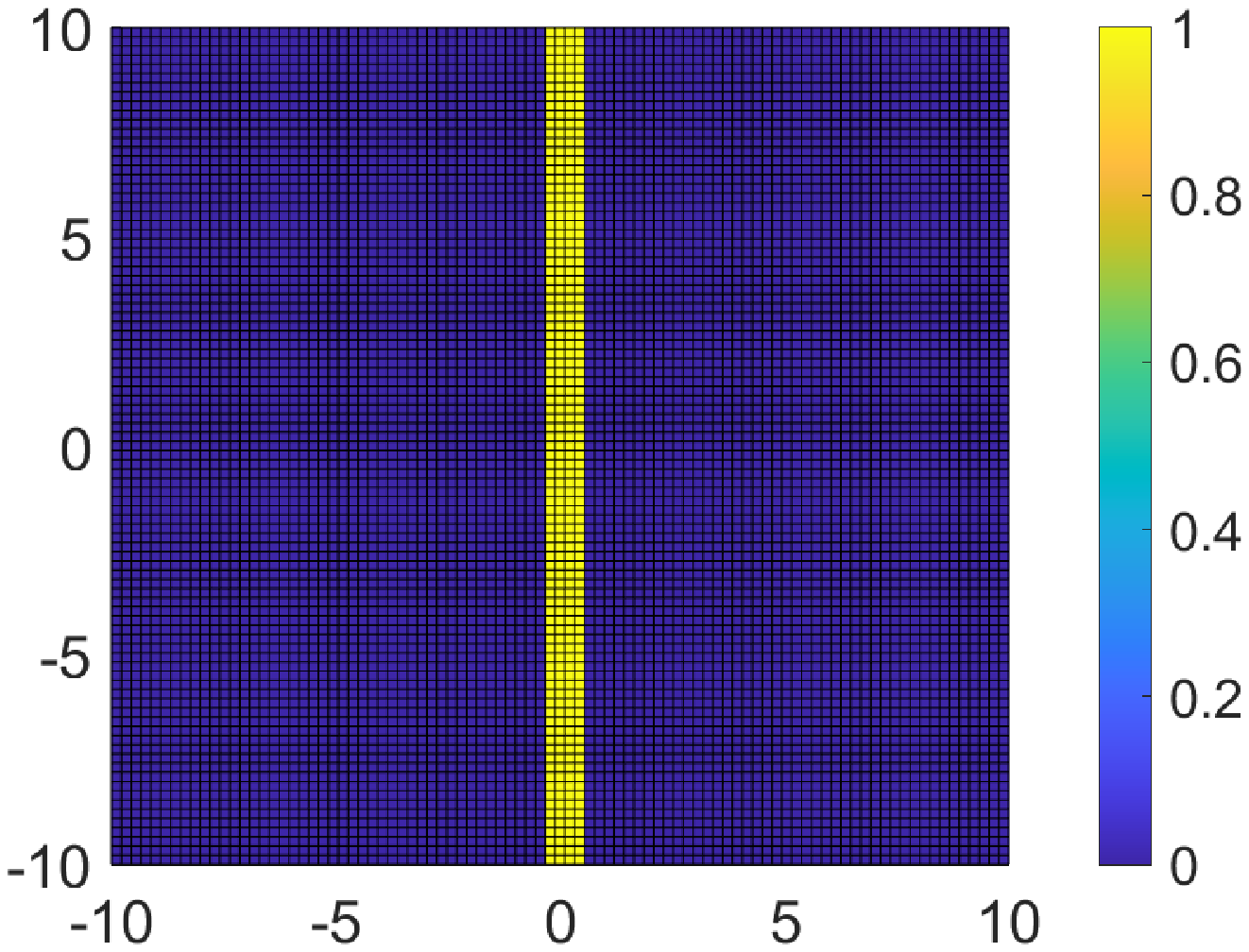}}
	\subfloat[$\overline{u}_0=u_0^{38}$]{\includegraphics[width = 0.5\textwidth]{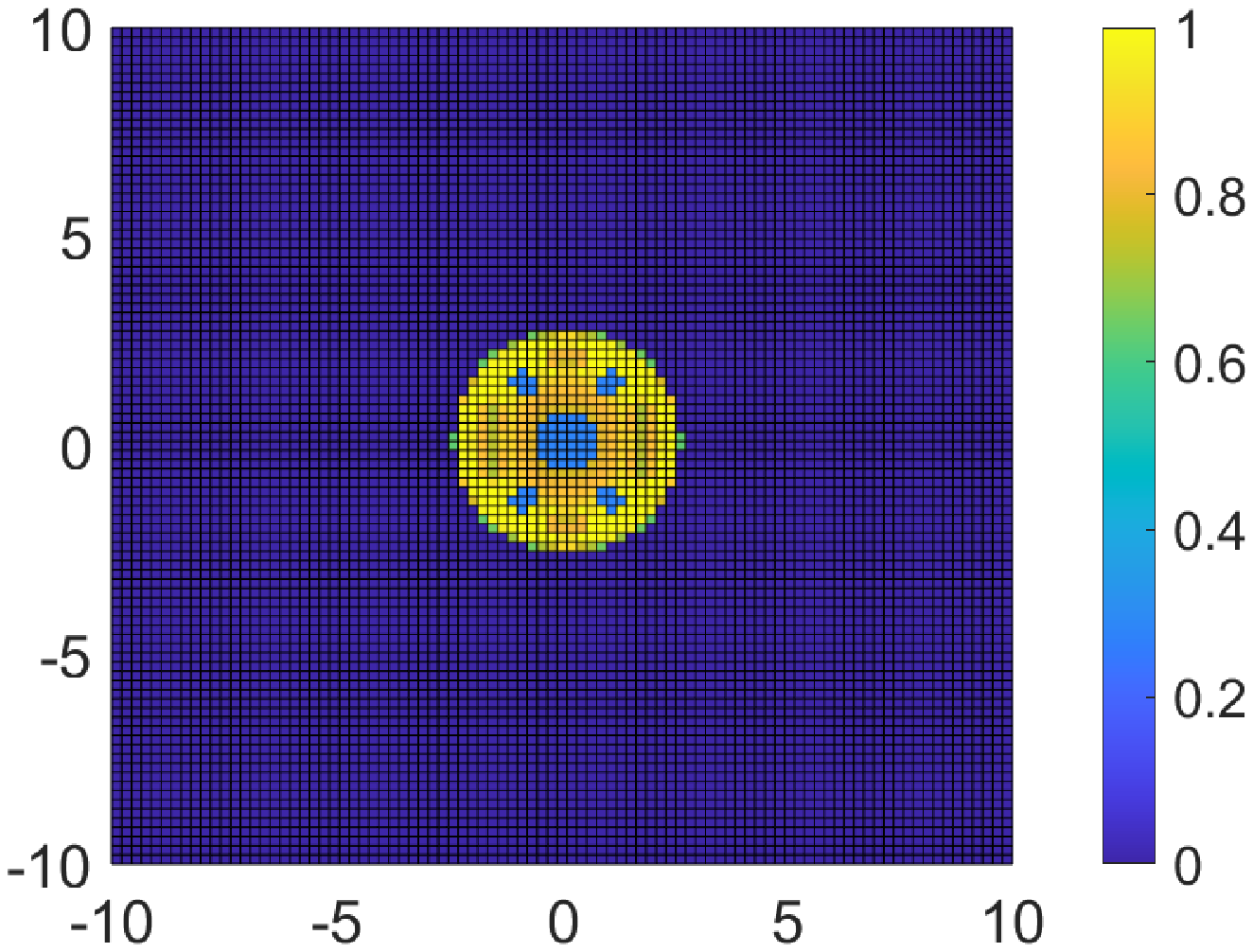}}
	\caption[2D-simulations example 2, local optimum]{In the left-hand side is showed the input of the algorithm, given by the stripe of width $r=0.91$ centred at the origin. In the right-hand side, the local optimum found by the numerical algorithm after 38 iterations.}
	\label{Ejemplo 2}
\end{figure}
\bigskip
\begin{figure}[!h]
	\centering
	\includegraphics[width=0.7\textwidth]{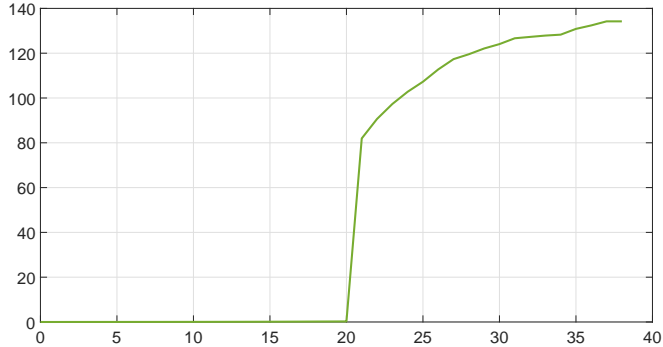}
	\caption[Evolution of the objective function example 2]{Evolution of the objective function from the initialisation $\mathcal{J}_{30}(u_0^0)=5.5\times 10^{-6}$ to the last iteration $\mathcal{J}_{30}(u_0^{38})=134.2$.} 
	\label{Fig:operador_3}
\end{figure}

We observe that, in this case, the value of the objective function remains very low during the first 20 iterations. This fact, together with the radial geometry of the optimum found by the algorithm suggests that this stripe geometry is not optimal. It should also be pointed out that the geometry of the local optimum is interesting: indeed, it shows regions of zero density (i.e. the optimum $\overline u_0$ found by the algorithm is equal to 0 in theses regions) in the middle of regions of full density (i.e. where $\overline u_0=1$), which exemplifies the phenomenon described in the one-dimensional case in \cite{GHR}.

Another relevant feature is that the optima found in the first and second examples are different, which indicates that our algorithm converge to local optima, and thus that the choice of the initial distribution $u_0^0$ is crucial.\color{black}

\paragraph{Example 3}

For this example we keep the settings of the previous one, but we consider a higher initial mass $m=27$. The geometry of the initial distribution is a stripe of full density dividing the domain into two regions of zero density, like in the example 2, see Fig. \ref{Ejemplo 3}(a). 

The corresponding local optimum found by the numerical algorithm is showed in Fig. \ref{Ejemplo 3}(b). As in the previous case, the optimiser reflects a low density zone ringed by a high density region. This gap is clearly filled by diffusion as time evolves. 

\begin{figure}[!h]
	\centering
	\subfloat[$u_0^0$]{\includegraphics[width = 0.5\textwidth]{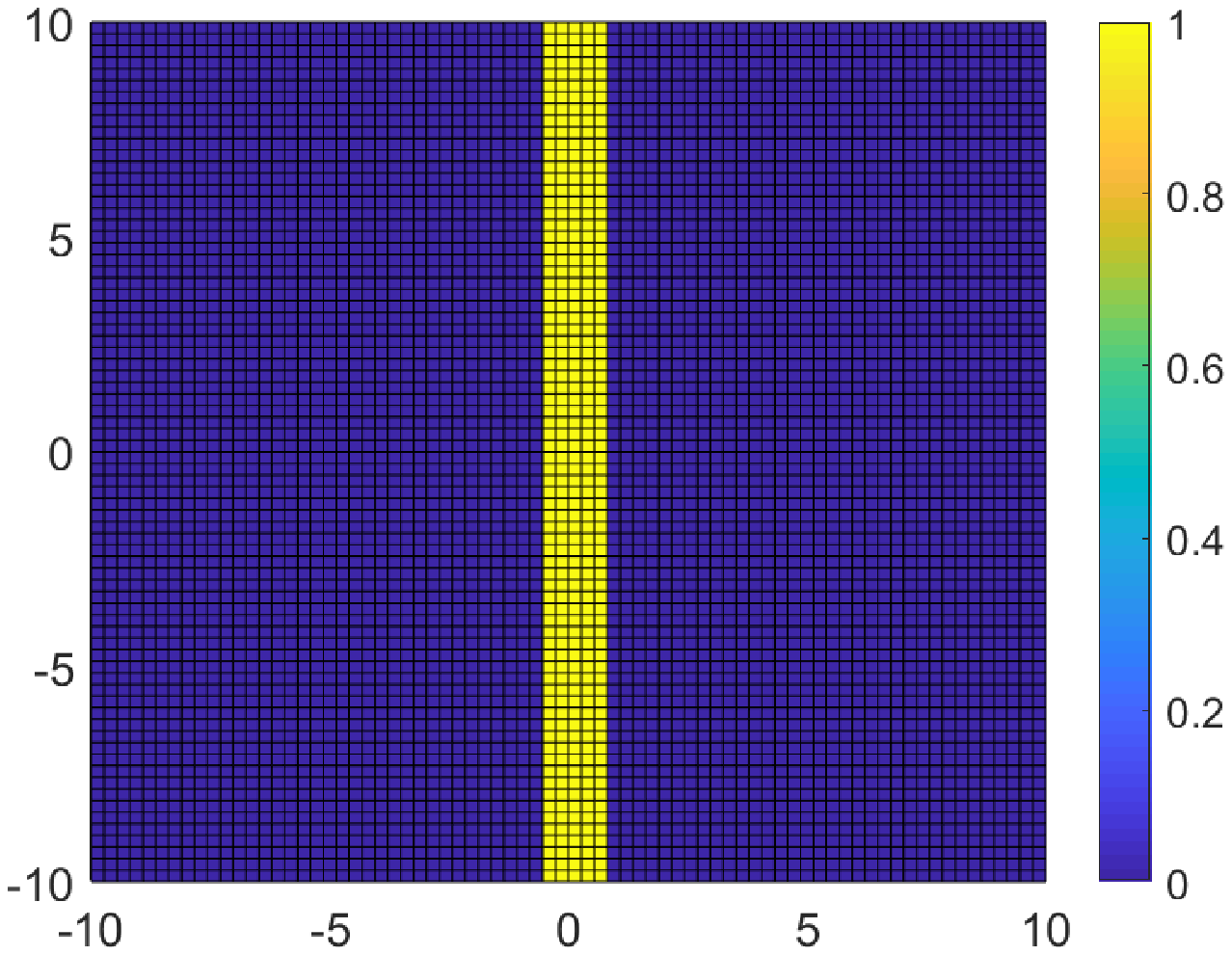}}
	\subfloat[$\overline{u}_0=u_0^{50}$]{\includegraphics[width = 0.5\textwidth]{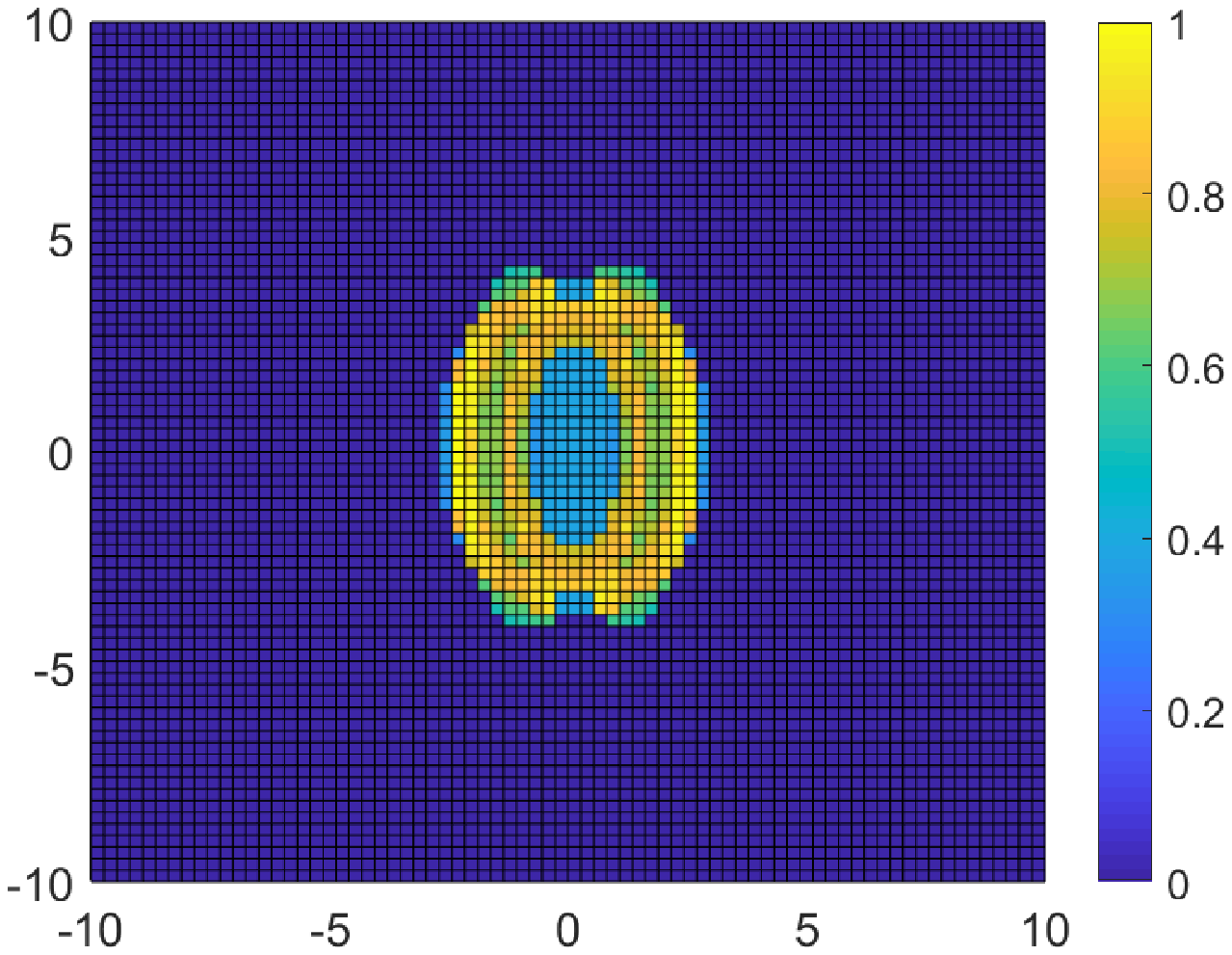}}
	\caption[2D-simulations example 3, local optimum]{In the left-hand side is showed the input of the algorithm, given by the stripe of width $r=1.6$ centered at the origin. In the right-hand side, the local optimum found by the numerical algorithm after 50 iterations.}%
	\label{Ejemplo 3}
\end{figure}

\begin{figure}[!h]
	\centering
	\includegraphics[width=0.7\textwidth]{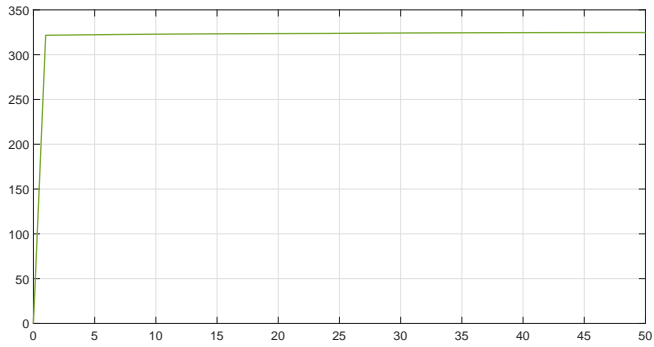}
	\caption[Evolution of the objective function example 3]{Evolution of the objective function from the initialisation $\mathcal{J}_{30}(u_0^0)=3\times 10^{-5}$ to the last iteration $\mathcal{J}_{30}(u_0^{50})=324$.}%
	\label{Fig:operador_4}
\end{figure}

This example suggests  once again the non optimality of stripe-like initial distributions. Note from Fig. \ref{Fig:operador_4} that, despite the considerable increase of the initial mass with respect to example 2, the values of the objective function $\mathcal{J}_{30}(u_0^0)$ associated to the stripe is of the order of $10^{-5}$, which is very low compared with the value associated to the final distribution. 

\bigskip

Finally, let us mention that a possible approximation of the maximiser was discussed in the Appendix of \cite{TheseAna}. Namely, in this thesis, the author replaced the maximiser $\overline{u}_{0}$ by its mean on each of the connected components of $\{\overline{u}_{0}>0\}$. This gives pretty good results in several cases. It would be good to manage to quantify analytically the difference of criterion between this approximated initial datum and the global maximiser.
%

 \subsubsection{The non-concave monostable case}\label{e:f}
 We now present some simulations in the case $$f(u)=(u+0.25)u(1-u),$$ still working under the constraint that $0\leq u_0\leq 1$. The motivation behind this case is that this non-linearity is monostable on $(0;1)$, i.e. it only has one stable equilibrium, but it is not concave. As a consequence, the theoretical approach developed in \cite{NadinToledo} can not guarantee that the optimiser $\overline u_0$ is the characteristic function of a subset of $\O$.
 
The parameters of the simulations are still the same: $T=30$, and the initial configuration is the same as in Example 1: the initialisation is a ball of full density, with mass $m=5.8\pi$ located in the middle of the domain $\O$.  We refer to Fig. \ref{Ejemplo 10} and \ref{Ejemplo11}. We should however point out that in this simulation, the value of the objective function remains almost constant, despite the fact that the final configuration is very different from the initial one. It is plausible that such monostable non-linearities converge too quickly to the equilibrium. 

\begin{figure}[!h]
	\centering
	\subfloat[$u_0^0$]{\includegraphics[width = 0.5\textwidth]{Figures//U_0_0_edit.eps}}
	\subfloat[$\overline{u}_0=u_0^{50}$]{\includegraphics[width = 0.5\textwidth]{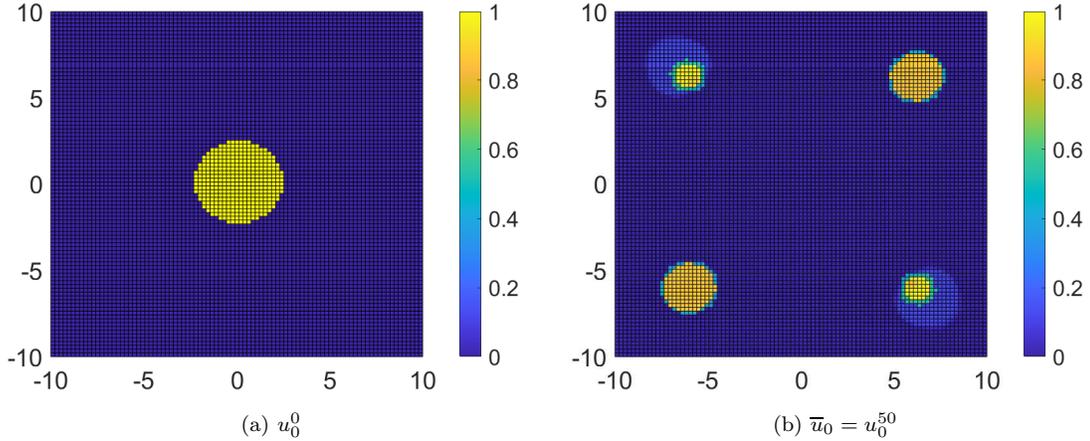}}
	\caption[2D-simulations example, local optimum]{In the left-hand side we display the input of the algorithm, given by a centered ball. In the right-hand side, we show the local optimum found by the numerical algorithm after 50 iterations.}%
	\label{Ejemplo 10}
\end{figure}

\begin{figure}[!h]
	\centering
	\includegraphics[width=0.7\textwidth]{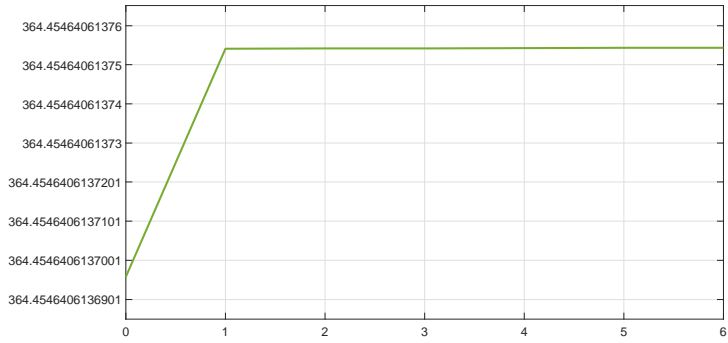}
	\caption[Evolution of the objective function ]{Evolution of the objective function from the initialisation $\mathcal{J}_{30}(u_0^0)=364.4 $ to the last iteration $\mathcal{J}_{30}(u_0^{50})=364.5$.}%
	\label{Ejemplo11}
\end{figure}

 \color{black}
 \newpage
\section{Conclusion, open problems and possible extensions}\color{black}
We make, in this conclusion, several concluding remarks and comments about possible generalisations and extensions of the results presented in this paper. For each of them, we try to present the arguments that have led us to the conclusion that other approaches were necessary in general.
\subsection{Regarding the regularity of the singular arc}
One of the main drawbacks of Theorem \ref{Prop1} is the regularity assumption on the singular arc. Namely, we obtain, for a maximiser $u_0$ of $\mathcal J_T$, the characterisation $f''(u_0)\leq 0$ only on the interior of the singular arc $\{0<u_0<1\}$. The method presented in this paper (two-scale expansions) strongly relies on the smoothness properties of the cut-off function $\theta$.

It may be tempting, in the one dimensional case, to overcome this difficulty arguing as in \cite{MNPE}: if we simply assume that the singular arc $\omega$ is measurable, but has positive measure, one can show that for every $K\in \N$ there exists $h_K\in L^2(\O)$, supported in $\omega$, that writes 
\begin{equation}h_K=\sum_{k\geq K}\alpha_{k,K}\cos(k\cdot)\,, \Vert h_K\Vert_{L^2}=1.\end{equation}Using the fact that $h_K$ only has high Fourier modes, one may hop for a two-scale expansion of the form 
$$h_K\approx \sum_{k\geq K}\alpha_{k,K}\left(h_{k}^0(k^2t,x,kx)+\frac1k h_k^1(k^2t,x,kx)\right).$$This is however \emph{a priori} prohibited by the problem of separation of phase: to obtain such a description, one needs welll-separated phases, in the sense of \cite{AllaireBriane}. In the context of Fourier series, this would require, at the very least, that $h_K$ should write as a lacunary Fourier series (typically, $h_K=\sum_{j=0}^\infty\alpha_{j,K}\cos(K^jx)$). However, for such lacunary Fourier series, Zygmund's theorem (see {\cite{Kovrizhkin2003}} for instance) prohibits that they have compact support, so that  admissible perturbations can not have this structure. This is a major drawback, and it is unclear whether or not one may be able to overcome this difficulty via a similar approach, or if an entirely new strategy needs to be devised.

Another approach would be to prove some regularity on $\omega$ ensuring that almost every of its point lie in its interior. This is satisfied for example if $\overline{u}_{0}$ is Riemann integrable (since, due to Lebesgue's characterisation of Riemann integrable functions, almost every point is a continuity point of $\overline{u}_{0}$). Riemann integrability is satisfied by BV functions. Unfortunately, we were not able to push the regularity further than $L^{\infty}$.

\subsection{Monostable non-linearities}
As seen in subsection \ref{e:f} of this paper, the numerical approach we propose, based on Theorem \ref{Prop1}, works for general monostable non-linearities. The theoretical tools are, however, not sufficient at this level to fully characterise optimisers. An interesting question would be to discuss whether or not optimisers in the monostable case are always bang-bang, are if some degeneracy zones can appear.
\color{black}

\subsection{The singular arc in higher dimensions}
  It may be plausible to \color{black} adapt the methods of Theorem \ref{Prop1} to obtain a characterisation of the singular arc analogous to that of Theorem \ref{Prop1} in the case $\O=\prod_{i=1}^N [0;a_i]$, $a_i>0$. To do so, the main difference with our proof would be to replace the initial perturbation $\theta(x)\cos(kx)$ with $\prod_{i=1}^N \theta(x)\cos(k x_i)$.

\subsection{Rearrangement inequalities for other types of boundary conditions}
In this work, we mostly dealt with the case of Neumann boundary conditions in the one-dimensional case. We ought to note two things: first, the proof of theorem \ref{Prop1} should hold in the case of Dirichlet or of Robin boundary conditions, provided the functions $\cos(k\cdot)$, in the proof, are replaced with the Dirichlet or Robin eigenfunctions of the laplacian in the interval. Second, regarding theorem \ref{Th:Convex}, the same type of results can be obtained in a straightforward manner for the case of Dirichlet boundary conditions, by applying directly \cite{Bandle}. The case of Robin boundary conditions may be encompassed by using the recent Talenti inequalities obtained in this case in \cite{Alvino2019ATC}. 
Addressing the problem on the full line $\R$ is more tricky, since in this case even the existence of a maximiser is unclear. We plan on investigating such matters in future works.

\bibliographystyle{abbrv}
\nocite{*}
\bibliography{BiblioMNT}

\end{document}